\newcommand {\mm}[1] {\ifmmode{#1}\else{\mbox{\(#1\)}}\fi}
\newcommand {\scalprod}[2] {{\langle #1 , #2 \rangle}}
\newcommand{\denselist}{\itemsep 0pt\parsep=1pt\partopsep 0pt}
\newcommand{\ignore}[1]{}
\newsavebox{\smallProofsym}                 
\long\def\@makecaption#1#2{%
  \vskip\abovecaptionskip
  \internallinenumbers
  \sbox\@tempboxa{\small #1: #2}%
  \ifdim \wd\@tempboxa >\hsize
    \small #1: #2\par
  \else
    \global \@minipagefalse
    \hb@xt@\hsize{\hfil\box\@tempboxa\hfil}%
  \fi
  \vskip\belowcaptionskip}
\theoremstyle{plain}
\newtheorem*{supermaintheorem*}{Main Theorem}
\newtheorem*{supermaincorollary*}{Main Corollary}
\newtheorem*{supermaindefinition*}{Main Definition}
\newcommand{\Rspace}        {\mm{{\mathbb R}}}
\newcommand{\Sspace}        {\mm{{\mathbb S}}}
\newcommand{\Zspace}        {\mm{{\mathbb Z}}}
\newcommand{\Delaunay}[2]   {\mm{{\rm Del}_{#1}{({#2})}}}
\newcommand{\Hull}[2]       {\mm{{H}_{#1}{({#2})}}}
\newcommand{\bigOh}[1]      {\mm{\mathcal{O}\left({#1}\right)}}
\newcommand{\AAA}           {\mm{\bf A}}
\newcommand{\aaa}           {\mm{\bf a}}
\newcommand{\bbb}           {\mm{\bf b}}
\newcommand{\ccc}           {\mm{\bf c}}
\newcommand{\xxx}           {\mm{\bf{x}}}
\newcommand{\yyy}           {\mm{\bf{y}}}
\newcommand{\zzz}           {\mm{\bf{z}}}
\newcommand{\wdist}[2]      {\mm{\pi_{#1}{({#2})}}}
\newcommand{\sign}[1]       {\mm{\rm sgn}{({#1})}}
\newcommand{\cover}[2]      {\mm{{c}_{#1}{({#2})}}}
\newcommand{\card}[1]       {\mm{{\#}{#1}}}
\newcommand{\conv}[1]       {\mm{{\rm conv\,}{#1}}}
\newcommand{\norm}[1]       {\mm{\|{#1}\|}}
\newcommand{\Edist}[2]      {\mm{\|{#1}-{#2}\|}}
\newcommand{\Above}[1]      {\mm{{\rm abv}{({#1})}}}
\newcommand{\Through}[1]    {\mm{{\rm thr}{({#1})}}}
\newcommand{\Below}[1]      {\mm{{\rm blw}{({#1})}}}
\newcommand{\ee}            {\mm{\varepsilon}}
\DeclareMathOperator{\vol}{vol}
\definecolor{blue-green}{rgb}{0.0, 0.87, 0.87}
\definecolor{dark-green}{rgb}{0.0, 0.66, 0.00}
\newcommand{\Skip}[1]       {}
\newcommand{\AG}[1]         {{\textcolor{orange}{#1}}}
\title{On Spheres with $k$ Points Inside}
\titlerunning{On Spheres with $k$ Points Inside}
\author{Herbert Edelsbrunner}{IST Austria (Institute of Science and Technology Austria), Kloster\-neu\-burg, Austria}{herbert.edelsbrunner@ist.ac.at}{https://orcid.org/0000-0002-9823-6833}{}
\author{Alexey Garber}{School of Mathematical and Statistical Sciences, University of Texas Rio Grande Valley, Brownsville, Texas, USA}{alexey.garber@utrgv.edu}{https://orcid.org/0000-0002-9474-2077}{}
\author{Morteza Saghafian}{IST Austria (Institute of Science and Technology Austria), Kloster\-neu\-burg, Austria}{morteza.saghafian@ist.ac.at}{https://orcid.org/0000-0002-4201-5775}{}
\authorrunning{Edelsbrunner, Garber, Saghafian}
\keywords{Triangulations, higher-order Delaunay triangulations, hypertriangulations, Delone sets, $k$-sets, Worpitzky's identity, hypersimplices, $k$-facets, hyperplane arrangements.}
\begin{document}
\maketitle

\nolinenumbers

\begin{abstract}
  We generalize the classic definition of Delaunay triangulation and prove that for a locally finite and coarsely dense generic point set, $A \subseteq \Rspace^d$,
  the $d$-simplices whose vertices belong to $A$ and whose circumscribed spheres enclose exactly $k$ points of $A$ cover $\Rspace^d$ exactly $\binom{d+k}{d}$ times.
  Similarly, the subset of such simplices incident to a point in $A$ cover any small enough neighborhood of that point exactly $\binom{d+k-1}{d-1}$ times.
  We extend this result to the cases in which the points are weighted and when $A$ contains only finitely many points in $\Rspace^d$ or in $\Sspace^d$.
  Using these results, we give new proofs of classic results on $k$-facets, old and new combinatorial results for hyperplane arrangements, and a new proof for the fact that the volumes of hypersimplices are Eulerian numbers.
\end{abstract}

%% \newpage \tableofcontents

%% \newpage
%%%%%%%%%%%%%%%%%%%%%%%%%%%%%%%%%%%%%%%%%%%%%%%
%%%%%%%%%%%%%%%%%%%%%%%%%%%%%%%%%%%%%%%%%%%%%%%
\section{Introduction}
\label{sec:1}
%%%%%%%%%%%%%%%%%%%%%%%%%%%%%%%%%%%%%%%%%%%%%%%
%%%%%%%%%%%%%%%%%%%%%%%%%%%%%%%%%%%%%%%%%%%%%%%

In the seminal paper \cite{Del34}, Boris Delaunay (also spelled Delone) introduced the Delaunay triangulation of a finite point sets using simplices with empty circumscribed spheres. 
His construction can be reformulated as follows: 
for a (finite and generic) point set, $A \subseteq \Rspace^d$, the simplices with vertices in $A$ that contain no points of $A$ inside their circumscribed spheres cover the convex hull of $A$ in one layer.
In this paper,\footnote{An earlier version of this paper appeared in \cite{EGS25}, which this version extends by generalizing the main theorem to weighted points in Section~\ref{sec:2.5} and to points and weighted points on the sphere in Sections~\ref{sec:3.1} and \ref{sec:3.2}, reproving a result by Clarkson about the maximum number of local minima in the $k$-level of a hyperplane arrangement in Section~\ref{sec:4.2}, and giving a tight bound on the maximum number of minimum helf chambers in a half-space arrangement in Section~\ref{sec:4.3}.}we generalize Delaunay's construction and prove similar properties for simplices with circumscribed spheres that enclose exactly $k$ points of $A$, for some fixed non-negative integer $k$.
We call these simplices the $k$-hefty simplices of $A$. 

\smallskip
We begin by introducing the main concepts.
A set $A \subseteq \Rspace^d$ is \emph{locally finite} if every closed ball contains at most a finite number of the points of $A$, and it is \emph{coarsely dense} if every closed half-space contains at least one and therefore infinitely many points of $A$.
If $A$ has both properties, we call it a \emph{thin Delone set}; compare with the more restrictive class of \emph{Delone sets}, which are \emph{uniformly discrete} and \emph{relatively dense}, meaning the smallest inter-point distance is bounded away from $0$, and the radius of the largest empty ball is bounded away from $\infty$.
We call $A$ \emph{generic} if no $d+1$ of its points lie on a common hyperplane, and no $d+2$ of its points lie on a common $(d-1)$-sphere.
Any $(d-1)$-sphere bounds a closed $d$-ball and thus partitions $\Rspace^d$ into points \emph{inside} the sphere (in the interior of the ball), points \emph{on} the sphere, and points \emph{outside} the sphere (in the complement of the closed ball).
Assuming $A$ is generic, there is a unique $(d-1)$-sphere passing through any $d+1$ points of $A$, which we call the \emph{circumscribed sphere} of the $d$-simplex spanned by the points.
\begin{supermaindefinition*}
  \label{dfn:hefty_simplex}
  Let $k$ be a non-negative integer and $A \subseteq \Rspace^d$ a generic thin Delone set or a generic finite set. 
  A $d$-simplex with vertices in $A$ is \emph{$k$-hefty} if exactly $k$ points of $A$ lie inside the circumscribed sphere of the $d$-simplex.
\end{supermaindefinition*}
For example, the $0$-hefty simplices are the top-dimensional simplices in the Delaunay triangulation of $A$,
and $k$-hefty simplices with $k > 0$ are related to the cells in higher-order Delaunay triangulations \cite{Aur90,EdOs21}.
Our main results are Theorems~\ref{thm:global_covering} and \ref{thm:local_covering}, which we restate here in less technical terms:
\begin{supermaintheorem*}
  Let $k$ be a non-negative integer, $A \subseteq \Rspace^d$ a generic thin Delone set, and $a \in A$.
  Then the $k$-hefty simplices of $A$ cover $\Rspace^d$ exactly $\binom{d+k}{d}$ times, and the $k$-hefty simplices incident to $a$ cover any sufficiently small neighborhood of $a$ $\binom{d+k-1}{d-1}$ times.
\end{supermaintheorem*}
More specifically, almost every point of $\Rspace^d$ is covered by exactly $\binom{d+k}{d}$ $k$-hefty simplices, while boundary points of $k$-hefty simplices are of course contained in more than this number of such simplices.
Similarly, almost every point sufficiently close to $a \in A$ is covered by exactly $\binom{d+k-1}{d-1}$ $k$-hefty simplices with vertex $a$, while boundary point are again contained in more than this number of such simplices.
We also prove versions of this theorem for finitely many points, weighted points, and points on the $d$-dimensional sphere.
In addition, we apply the results to get new proofs of old and new results on $k$-facets, arrangements of hyperplanes, arrangements of hemispheres, and the volume of hypersimplices.

\smallskip
The paper is organized as follows.
Section~\ref{sec:2} introduces the main definitions and proves the main result for thin Delone sets (Theorems~\ref{thm:global_covering} and \ref{thm:local_covering}), which it then extends to finite sets (Theorem~\ref{thm:covering_for_finite_sets}) and weighted points (Theorem~\ref{thm:covering_for_weighted_points}).
Section~\ref{sec:3} considers hefty simplices on the $d$-dimensional sphere, and generalizes the local and global covering results for points (Theorem~\ref{thm:covering_on_sphere})
and for weighted points (Theorem~\ref{thm:covering_on_sphere_weighted_case}).
Section~\ref{sec:4} applies the results on covering to give new proofs of old and new results in discrete geometry: a new proof of Alon and Gy\H{o}ry's bound on the number of at-most-$k$-sets (Proposition~\ref{prop:at_most_k-sets}), a new proof of a sharpened version of Lov\'{a}sz Lemma (Proposition~\ref{prop:exact_Lovasz_lemma}), a new proof of Clarkson's result on the number of minima in a hyperplane arrangement (Proposition~\ref{prop:counting_minima_in_levels}), a proof of a new bound on the maximum number of minimum heft cells in a half-space arrangement (Theorem~\ref{thm:repeated_minimum_heft}), and a new proof of the fact that the volumes of hypersimplices are Eulerian numbers (Theorem~\ref{thm:hypersimplex_identity}).
Section~\ref{sec:5} offers concluding remarks.

%% \newpage
%%%%%%%%%%%%%%%%%%%%%%%%%%%%%%%%%%%%%%%%%%%%%%%
%%%%%%%%%%%%%%%%%%%%%%%%%%%%%%%%%%%%%%%%%%%%%%%
\section{Hefty Simplices in Euclidean Space}
\label{sec:2}
%%%%%%%%%%%%%%%%%%%%%%%%%%%%%%%%%%%%%%%%%%%%%%%
%%%%%%%%%%%%%%%%%%%%%%%%%%%%%%%%%%%%%%%%%%%%%%%

This section presents the main result of this paper, which we state for infinite sets and then extend to finite sets and sets of weighted points in Euclidean space.
We begin with the main technical lemma before stating and proving the main theorem.

%%%%%%%%%%%%%%%%%%%%%%%%%%%%%%%%%%%%%%%%%%%%%%%
\subsection{First a Technical Lemma}
\label{sec:2.1}
%%%%%%%%%%%%%%%%%%%%%%%%%%%%%%%%%%%%%%%%%%%%%%%

For technical reasons we first show that the $k$-hefty simplices of a thin Delone set $A$ are ``locally uniform'' in size.
Specifically, we prove an upper bound for the radii of spheres that enclose a fixed point, $x \in \Rspace^d$, as well as at most $k$ points of $A$.
To this end, we write $B(x,R)$ for the closed ball with center $x$ and radius $R$, and note that the number of points of $A$ in this ball goes to infinity when $x$ is fixed and $R$ goes to infinity.
\begin{lemma}
  \label{lem:hefty_simplices_have_bounded_radius}
  Let $A \subseteq \Rspace^d$ be coarsely dense, $k$ a non-negative integer, and $x \in \Rspace^d$.
  Then there exists $R = R(x,A,k)$ such that if $x$ is inside a sphere that is not fully contained in $B(x,R)$, then there are at least $k+1$ points of $A \cap B(x,R)$ inside this sphere.
\end{lemma}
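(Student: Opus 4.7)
The plan is to reduce the geometric problem to a uniform-in-direction statement proved by compactness, then carry out a short case analysis on how far the sphere's center is from $x$. The uniform claim to target is: there exist $R_1 > 0$ and $\epsilon > 0$, depending only on $x, A, k$, such that for every unit vector $v \in \Sspace^{d-1}$ the set
\[
  A_v := \{p \in A \cap B(x, R_1) : \scalprod{p-x}{v} \geq \epsilon\}
\]
has cardinality at least $k+1$. Intuitively $A_v$ captures $k+1$ points of $A$ in a fixed cone pointing in direction $v$, and the uniformity of $R_1, \epsilon$ in $v$ is what makes the subsequent geometric argument go through.

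I would prove the claim by contradiction. If no such $R_1, \epsilon$ exist, then taking $R_1 = m$ and $\epsilon = 1/m$ yields unit vectors $v_m$ with $|A_{v_m}| \leq k$. By compactness of $\Sspace^{d-1}$, pass to a convergent subsequence $v_m \to v^*$. Coarse density applied to the half-space $\{y : \scalprod{y-x}{v^*} \geq \delta\}$ for each $\delta > 0$ implies the open half-space $\{y : \scalprod{y-x}{v^*} > 0\}$ contains infinitely many points of $A$, so I can pick $p_1, \dots, p_{k+1} \in A$ with $\alpha := \min_j \scalprod{p_j-x}{v^*} > 0$ and $R' := \max_j \Edist{p_j}{x}$ finite. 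For $m$ large enough that $R' \|v_m - v^*\| < \alpha/2$, $m \geq R'$, and $m \geq 2/\alpha$, each $p_j$ lies in $A_{v_m}$, contradicting $|A_{v_m}| \leq k$.

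With the claim in hand, set $D := R_1^2/(2\epsilon)$ and $R := R_1 + 2D$. Consider any sphere $\partial B(c,r)$ with $x$ strictly inside, so $d := \Edist{c}{x} < r$, and assume it is not contained in $B(x, R)$; this is equivalent to $r + d > R$. If $d \leq D$ then $r > R - d \geq R_1 + D \geq R_1 + d$, hence $r - d > R_1$ and $B(c, r)$ strictly contains $B(x, R_1)$, which in turn contains at least $k+1$ points of $A$ by the claim. If $d > D$, set $v := (c-x)/d$ and apply the claim in direction $v$ to obtain $p_1, \dots, p_{k+1} \in A \cap B(x, R_1)$ with $\scalprod{p_j - x}{v} \geq \epsilon$; a direct expansion yields
\[
  \Edist{p_j}{c}^2 = \Edist{p_j}{x}^2 - 2d \scalprod{p_j - x}{v} + d^2 \leq R_1^2 - 2D\epsilon + d^2 = d^2 < r^2,
\]
so each $p_j$ lies strictly inside the sphere. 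In either case the $k+1$ witness points belong to $B(x, R_1) \subseteq B(x, R)$, proving the lemma.

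The main obstacle is establishing the uniform claim: coarse density is a per-direction statement, so without further work one only obtains $\epsilon$ and $R_1$ that depend on the direction, which is useless for covering the whole space of spheres through $x$. The compactness-and-openness argument above is what produces a single pair $(R_1, \epsilon)$ that works for all directions simultaneously, after which the case split on $d$ versus $D$ is elementary.
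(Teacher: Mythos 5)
Your proof is correct and follows essentially the same route as the paper: a compactness argument over the unit sphere of directions upgrades the per-direction consequence of coarse density to a uniform statement, followed by an elementary geometric estimate showing that any large sphere enclosing $x$ captures the resulting witness points. The only (cosmetic) difference is that your uniform claim is phrased via half-space/slab conditions $\scalprod{p-x}{v}\geq\ee$, which makes the continuity-in-$v$ step a one-line Cauchy--Schwarz bound, whereas the paper works directly with the family of spheres through $x$ and a perturbation argument on their centers.
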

\begin{proof}
  Without loss of generality, assume $x=0$. 
  For every unit vector, $u \in \Sspace^{d-1}$, the open half-space of points $y$ that satisfy $(y,u)>0$ contains infinitely many points of $A$. 
  It follows that the function $f_u \colon (0,\infty) \to \Zspace$ that maps $r > 0$ to the number of points of $A$ inside the sphere with center $ru$ and radius $r$ is non-decreasing and unbounded. 

  \smallskip
  We introduce $g \colon \Sspace^{d-1} \to \Rspace$ defined by $g(u) = \inf \{r > 0 \mid f_u(r)\geq k+1\}$ and claim that $g$ is bounded.
  To derive a contradiction, suppose $g$ is unbounded, and let $u_1, u_2, \ldots$ be an infinite sequence of unit vectors with $g(u_n) \geq n$.
  Since $\Sspace^{d-1}$ is compact, there is a subsequence that converges to a vector $u_0 \in \Sspace^{d-1}$.
  Let $S$ be the sphere with radius $g(u_0)+1$ and center $(g(u_0)+1) u_0$. 
  By construction, there are at least $k+1$ points of $A$ inside $S$.
  Since these points are (strictly) inside the sphere, there is a sufficiently small $\ee > 0$ such that moving the center of the sphere by at most $\ee$ while adjusting its radius so the origin remains on the sphere, retains at least $k+1$ points of $A$ inside the sphere.
  But this contradicts the unboundedness of $g$ as there are points $u_i$ in the subsequence that are within distance $\ee$ from $u_0$ with $g(u_i)$ much larger than $g(u_0)+1$.

  \smallskip
  Since $g$ is bounded, $M = \sup \{g(u) \mid u \in \Sspace^{d-1}\}$ is finite and, by construction of $g$, there are at least $k+1$ points of $A$ inside any sphere with center $y$ and radius $\norm{y}$ as long as $\norm{y} \geq M$.
  Setting $R = 2M$, every sphere with center $y$ that encloses the origin and is not contained in $B(0,R)$ has radius $r > M$.
  This sphere encloses the ball with center $M \frac{y}{\norm{y}}$ and radius $M$, so there are at least $k+1$ points of $A$ inside this sphere that all belong to $B(0,R)$.
\end{proof}
As an immediate consequence of Lemma~\ref{lem:hefty_simplices_have_bounded_radius}, the circumscribed sphere of any $k$-hefty simplex of $A$ that encloses $x$ is completely contained in $B(x,R)$, in which $R$ depends on $x$, $A$, and $k$.

%%%%%%%%%%%%%%%%%%%%%%%%%%%%%%%%%%%%%%%%%%%%%%%
\subsection{Global Covering}
\label{sec:2.2}
%%%%%%%%%%%%%%%%%%%%%%%%%%%%%%%%%%%%%%%%%%%%%%%

Our first goal is to generalize the classic result of Delaunay, which asserts that the $0$-hefty simplices of a generic thin Delone set cover $\Rspace^d$ in one layer; that is: every point of $\Rspace^d$ is contained in at least one $0$-hefty simplex and almost every point of $\Rspace^d$ is contained in exactly one $0$-hefty simplex.
Specifically, we show that for every generic thin Delone set, $A \subseteq \Rspace^d$, the $k$-hefty simplices cover $\Rspace^d$ $\binom{d+k}{d}$ times.
We call $\binom{d+k}{d}$ the \emph{$k$-th covering number} and note that it depends on the dimension, $d$, and the parameter, $k$, but not on the set $A$.
We call $x \notin A$ \emph{generic with respect to $A$} if $A \cup \{x\}$ is generic.
For a given generic thin Delone set $A$, almost every point $x \in \Rspace^d$ is generic with respect to $A$.
To see this, observe that by local finiteness of $A$ there are only countably many hyperplanes and spheres spanned by $d$ and $d+1$ points, respectively, so the union of these hyperplanes and spheres has Lebesgue measure zero.
\begin{theorem}
  \label{thm:global_covering}
  Let $k$ be a non-negative integer and $A \subseteq \Rspace^d$ a generic thin Delone set.
  Then any point $x \in \Rspace^d$ that is generic with respect to $A$ belongs to exactly $\binom{d+k}{d}$ $k$-hefty simplices of $A$.
\end{theorem}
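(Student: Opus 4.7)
My plan is to reduce the proof to an induction on $k$, combined with a careful analysis of how the count
\[
f_k(x) \;:=\; \bigl|\{k\text{-hefty simplices of $A$ containing $x$ in their interior}\}\bigr|
\]
changes as $x$ varies. The argument has three ingredients: finiteness of $f_k(x)$, local constancy of $f_k$ on generic $x$, and an inductive computation of the constant value using the local part of the Main Theorem.

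\textbf{Finiteness and local constancy.} Finiteness comes straight from Lemma~\ref{lem:hefty_simplices_have_bounded_radius}: any $d$-simplex $\sigma$ with $x \in \interior{\sigma}$ sits inside its closed circumscribed ball, so $x$ is strictly inside the circumscribed sphere of $\sigma$, and the lemma applied with $k+1$ then confines every such sphere to a fixed ball $B(x,R)$. For local constancy, $f_k$ can only change when $x$ crosses the relative interior of a $(d-1)$-simplex $\tau$ with vertices in $A$. Fix such a $\tau$ and parametrize the pencil of $(d-1)$-spheres through $\tau$ by a real coordinate $y$ on the perpendicular line $L_\tau$ through the circumcenter of $\tau$, setting $h(y) = |A \cap \interior{S_y}|$. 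Each $a \in A \setminus \tau$ contributes a $\pm 1$ jump in $h$ at a unique $y(a)$, the sign being that of the signed distance from $a$ to the hyperplane of $\tau$. Because $A$ is coarsely dense, $h(y) \to \infty$ as $y \to \pm\infty$, so at every finite level $k+\tfrac{1}{2}$ the number of up-crossings equals the number of down-crossings. Translated back to $\Rspace^d$, the number of $k$-hefty simplices of the form $\conv{(\tau \cup \{a\})}$ with $a$ on the positive side of the hyperplane of $\tau$ equals the number with $a$ on the negative side. Since exactly these two families swap their ``contains $x$'' status as $x$ crosses $\tau$, the change in $f_k$ is zero. A short connectivity argument on the complement of the union of $(d-1)$-simplex boundaries then upgrades this to a single global constant $f_k \equiv C_k$.

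\textbf{Computing $C_k$ by induction on $k$.} The base case $k=0$ is the classical Delaunay theorem, so $C_0 = 1 = \binom{d}{d}$. For $k \geq 1$, fix any $a \in A$ and let $x$ be generic in a small neighborhood of $a$. By genericity of $A$ (no $d+1$ points on a common hyperplane), every $d$-simplex of $A$ with $a$ in its closure either has $a$ as a vertex or has $a$ in its interior, giving the decomposition
\[
C_k \;=\; \binom{d+k-1}{d-1} \;+\; \bigl|\{k\text{-hefty simplices of $A$ with }a \in \interior{\sigma}\}\bigr|,
\]
where the first summand is the local covering theorem (Theorem~\ref{thm:local_covering}). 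For a simplex $\sigma$ of the second type, the convexity argument from the finiteness step places $a$ strictly inside the circumscribed sphere of $\sigma$; removing $a$ from $A$ therefore decreases the heft by exactly one, and this is a bijection with the $(k-1)$-hefty simplices of $A' := A \setminus \{a\}$ that contain $a$ in their interior. By the inductive hypothesis applied to the generic thin Delone set $A'$ at the generic point $a$, their number is $\binom{d+k-1}{d}$, and Pascal's identity closes the computation: $C_k = \binom{d+k-1}{d-1} + \binom{d+k-1}{d} = \binom{d+k}{d}$. The main obstacle is the local constancy step, and specifically the up-versus-down-crossing count for $h$ on $L_\tau$: it pivots on coarse density of $A$ to force $h(y) \to \infty$ at both ends of $L_\tau$, and the connectivity upgrade on $\Rspace^d$ minus the facet simplices needs some care; once this is secured, the rest is essentially bookkeeping around the local covering theorem and Pascal's identity.
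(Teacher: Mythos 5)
Your finiteness and local-constancy arguments are sound and coincide with Step~1 of the paper's proof: the transition count along the pencil of spheres through a facet $\tau$, with coarse density forcing the label to tend to infinity at both ends of the line, is exactly the paper's argument. The gap is in the computation of the constant $C_k$. Your inductive step feeds Theorem~\ref{thm:local_covering} into the identity $C_k = \binom{d+k-1}{d-1} + \binom{d+k-1}{d}$, but the only proof of Theorem~\ref{thm:local_covering} at level $k$ is itself a corollary of Theorem~\ref{thm:global_covering} at level $k$: the paper obtains the local count as the difference $\binom{d+k}{d} - \binom{d+k-1}{d}$ between the global counts at levels $k$ and $k-1$. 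So the equation you use, namely (global at $k$) $=$ (local at $k$) $+$ (global at $k-1$), is exactly the equation by which the paper later \emph{derives} the local count from the global one; invoking it to determine the global count at level $k$ is circular. You have one equation relating two unknowns, and the induction supplies no independent handle on either of them.

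What is missing is precisely what the paper's Steps~2 and~3 provide: first, that the constant $C_k(A)$ produced by your local-constancy step does not depend on $A$ (proved by transplanting bounded neighborhoods of two configurations into a single thin Delone set, using Lemma~\ref{lem:hefty_simplices_have_bounded_radius} to confine all relevant circumspheres), and second, an explicit evaluation of $C_k$ for one concrete set, the radial set of Figure~\ref{fig:radial}, where the $k$-hefty simplices through the origin correspond to compositions of $k$ into $d+1$ non-negative parts, giving $\binom{d+k}{d}$. Alternatively, your induction could be repaired by an independent proof that exactly $\binom{d+k-1}{d-1}$ $k$-hefty simplices incident to $a$ cover a generic point near $a$, one that does not pass through the global statement at level $k$; you have not supplied such a proof. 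The remaining bookkeeping in your inductive step (the vertex-versus-interior dichotomy at $a$ granted by genericity, and the heft-drop bijection with the $(k-1)$-hefty simplices of $A \setminus \{a\}$ containing $a$) is correct and is in fact the paper's proof of Theorem~\ref{thm:local_covering} run in reverse.
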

\begin{proof}
  The case $d=1$ is obvious since every $k$-hefty simplex of $A$ is an interval with endpoints in $A$ and exactly $k$ points between the two endpoints.
  Every point that is generic with respect to $A$, i.e.\ in $\Rspace \setminus A$, is contained in exactly $k+1$ such intervals.
  For $d \geq 2$, the proof splits into three steps.

  \medskip \noindent
  \textsc{Step 1.} Letting $k$ be a non-negative integer and $A \subseteq \Rspace^d$ a generic thin Delone set, we prove that there is a constant such that any point generic with respect to $A$ is contained in exactly this constant number of $k$-hefty simplices of $A$.
  Write $\cover{k}{x,A}$ for the number of $k$-hefty simplices of $A$ that contain $x$. 
  By Lemma~\ref{lem:hefty_simplices_have_bounded_radius}, $\cover{k}{x,A}$ is finite.
  Indeed, every $k$-hefty simplex of $A$ that contains $x$ must select its vertices from the finitely many points inside the ball $B(x,2M)$.
  To show that $\cover{k}{x,A}$ is the same for all generic points, we move $x$ continuously from one point to another.
  The only time $\cover{k}{x,A}$ can change is when $x$ passes through the boundary of a $k$-hefty simplex. 
  It suffices to show that for every $(d-1)$-simplex, $\Delta$, with vertices in $A$, the number of $k$-hefty simplices with facet $\Delta$ is the same on both sides of $\Delta$. 

  \smallskip
  Consider the line, $L$, that consists of all points equidistant to the vertices of $\Delta$ and mark each point $y \in L$ with the number of points of $A$ inside the sphere with center $y$ that passes through the vertices of $\Delta$.
  This partitions $L$ into labeled intervals, and since $A$ is generic, the labels of two consecutive intervals differ by exactly one.
  Fix a \emph{left to right} direction on $L$, move $y$ in this direction, and observe that the portion of space inside the sphere centered at $y$ that lies to the left of the hyperplane spanned by $\Delta$ shrinks, while the portion to the right of this hyperplane grows.
  The transitions from an interval labeled $k+1$ to another labeled $k$ are in bijection with the $k$-hefty simplices with facet $\Delta$ to the left of $\Delta$.
  Indeed, as $y$ makes the transition, there is a point of $A$ that passes from inside to outside the sphere centered at $y$, so this point is to the left of $\Delta$.
  Similarly, the transitions from an interval labeled $k$ to another labeled $k+1$ are in bijection with the $k$-hefty simplices with facet $\Delta$ to the right of $\Delta$.
  There are equally many transitions of either kind because the labels go to infinity on both sides.
  This proves that $\cover{k}{x,A}$ does not depend on $x$.
  \begin{figure}[hbt]
    \centering
    \resizebox{!}{1.6in}{\input{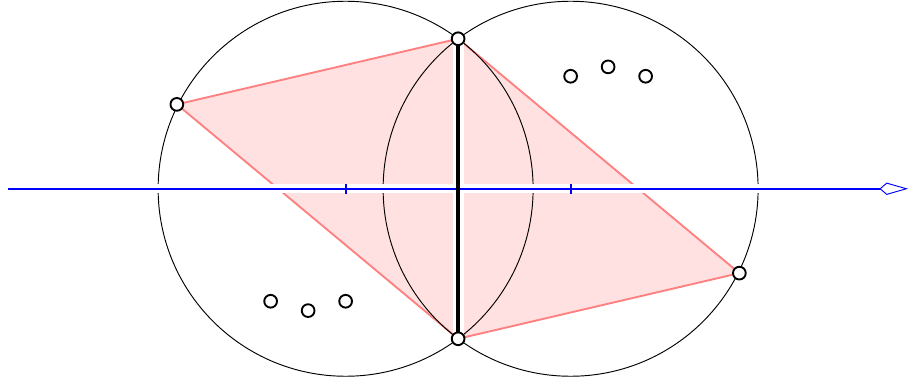_t}}
    \caption{\footnotesize Two circles in the $1$-parameter family of circles that pass through the endpoint of the edge $\Delta$.
    Both are the circumcircles of $k$-hefty triangles, with $k = 3$ in the case displayed.
    As we move the center from left to right, every point that leaves the inside of the circle lies to the left of $\Delta$, and every point that enters the inside of the circle lies to the right of $\Delta$.}
    \label{fig:directedline}
  \end{figure}

  \medskip \noindent
  \textsc{Step 2.} We strengthen the claim proved in Step~1 by showing that the constant depends on $d$ and $k$ but not on $A$.
  Specifically, we prove that for every dimension, $d$, and non-negative integer, $k$, there exists a number $\cover{k}{d}$ such that for any generic thin Delone set, $A \subseteq \Rspace^d$, any point $x \in \Rspace^d$ generic with respect to $A$ belongs to exactly $\cover{k}{d}$ $k$-hefty simplices of $A$.

  \smallskip
  It suffices to show that for two thin Delone sets, $A$ and $A'$, and two points, $x, x'\in \Rspace^d$, that are generic with respect to both sets, $\cover{k}{x,A} = \cover{k}{x',A'}$.
  By Lemma~\ref{lem:hefty_simplices_have_bounded_radius}, there exists $R > 0$ such that if a sphere encloses $x$ and a point outside $B(x,R)$, then there are at least $k+1$ points of $A \cap B(x,R)$ inside this sphere.
  It follows that the circumscribed spheres of all $k$-hefty simplices that enclose $x$ are contained in $B(x,R)$. 
  Similarly, let $R'$ be the constant from Lemma~\ref{lem:hefty_simplices_have_bounded_radius} for $x'$ and $A'$.
  We construct a new thin Delone set, $A''$, by picking points $y$ and $y'$ at distance larger than $R+R'$ from each other, and letting $A'' \cap B(y,R)$ and $A'' \cap B(y',R')$ be translates of $A \cap B(x,R)$ and $A' \cap B(x',R')$, respectively.
  We perturb the points if necessary to achieve genericity, and add more points outside $B(y,R)$ and $B(y',R')$ until $A''$ is thin Delone.
  By choice of $R$ and $R'$ we have $\cover{k}{x,A}=\cover{k}{y,A''}$, because every $k$-hefty simplex of $A$ whose circumscribed sphere encloses $x$ translates into a $k$-hefty simplex of $A''$ whose circumscribed sphere encloses $y$, and vice versa. 
  Similarly, we have $\cover{k}{x',A'}=\cover{k}{y',A''}$. 
  Finally, $\cover{k}{x,A} = \cover{k}{x',A'}$ because $\cover{k}{y,A''} = \cover{k}{y',A''}$ as proved in Step~1.
  \begin{figure}[hbt]
    \centering
    \vspace{0.1in}
    \resizebox{!}{1.8in}{\input{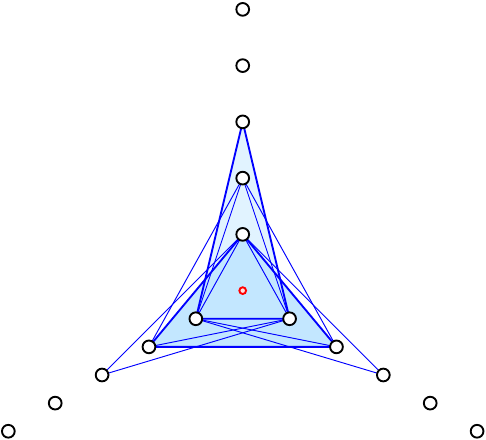_t}}
    \caption{\footnotesize Before perturbation, the points of $A$ lie on three half-lines emanating from the origin.
    We show all six $2$-hefty triangles, and emphasize two of them by shading.}
    \label{fig:radial}
  \end{figure}

  \medskip \noindent
  \textsc{Step 3.} We provide an explicit example of a generic thin Delone set, $A \subseteq \Rspace^d$, and a point, $x\in \Rspace^d$, with $\cover{k}{d} = \binom{d+k}{d}$.
  Specifically, we prove that for every dimension $d$ and non-negative integer $k$, there exists a generic thin Delone set $A \subseteq \Rspace^d$, and a point $x$ generic with respect to $A$, such that $\cover{k}{x,A} = \binom{d+k}{d}$.

  \smallskip
  Consider a regular $d$-simplex with vertices $v_0, v_1, \ldots, v_d$ and barycenter $0 = \tfrac{1}{d+1} \sum_j v_j$, and
  $A$ the set of points of the form $iv_j$, for integers $i \geq 1$ and $j = 0, 1, \ldots, d$.
  We call $A$ a \emph{radial set} and the points with fixed $j$ a \emph{direction}; see Figure~\ref{fig:radial}.
  Set $x = 0$.
  Every $k$-hefty simplex of $A$ that contains $0$ has exactly one vertex in each direction.
  Letting $i_0 v_0, i_1 v_1, \ldots, i_d v_d$ be these vertices, the number of points inside the circumscribed sphere is $\sum_j (i_j - 1) = k$.
  Enumerating these simplices is the same as writing $k$ as an ordered sum of $d+1$ non-negative integers, and there are exactly $\binom{d+k}{d}$ ways to do that.
  To complete the proof, we perturb $A$ while making sure that $d+1$ points span a simplex that contains $0$ before the perturbation iff they span such a simplex after the perturbation.
  This completes the proof of the first claim in our Main Theorem.
\end{proof}

%%%%%%%%%%%%%%%%%%%%%%%%%%%%%%%%%%%%%%%%%%%%%%%
\subsection{Local Covering}
\label{sec:2.3}
%%%%%%%%%%%%%%%%%%%%%%%%%%%%%%%%%%%%%%%%%%%%%%%

By Theorem~\ref{thm:global_covering}, the $k$-hefty simplices of a thin Delone set cover $\Rspace^d$ without gap an integer number of times.
However, beyond one dimension, it is generally not possible to split such a cover into sub-covers (separate layers) that enjoy the same property.
As an example, consider Figure~\ref{fig:pentagon}, which shows one point surrounded by five others in $\Rspace^2$.
Assuming the six points are generic and the remaining points of the thin Delone set---which are not shown---are sufficiently far from the six, the point at the center is incident to five $1$-hefty triangles---which are shown.
Globally, the $1$-hefty triangles cover any small neighborhood of this point three times, but it is not possible to decompose this cover into three; see \cite{PTT07} for indecomposable coverings in other contexts.
It is however possible to split it locally into two covers.
The simplices that prevent a local split into three separate layers are all incident to a common vertex, and we prove that the hefty simplices incident to a common point cover any small neighborhood a fixed number of times.
This is the second claim in our Main Theorem.
\begin{figure}[hbt]
  \centering
  \vspace{0.1in}
  \resizebox{!}{1.8in}{\input{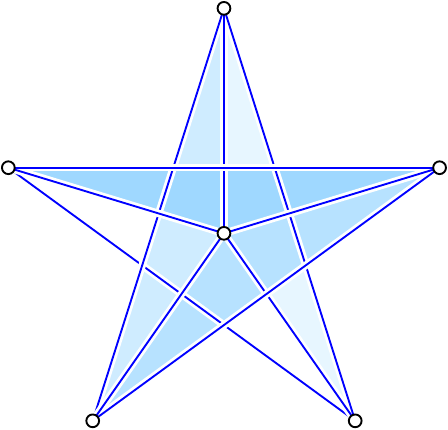_t}}
  \caption{\footnotesize Six points of a Delone set, in which one is surrounded by five others, and the remaining points lie outside the circumcircles defined by these points.
  The five $1$-hefty triangles incident to the surrounded point cover any small neighborhood of that point twice, but they cannot be split into two disjoint covers of such a neighborhood.}
  \label{fig:pentagon}
\end{figure}
\begin{theorem}
  \label{thm:local_covering}
  Let $A \subseteq \Rspace^d$ be a generic thin Delone set, $k$ a non-negative integer, and $a \in A$ a point in this set.
  Then the $k$-hefty simplices of $A$ incident to $a$ cover any sufficiently small neighborhood of $a$ exactly $\binom{d+k-1}{d-1}$ times.    
\end{theorem}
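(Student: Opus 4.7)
The plan is to reduce the local covering at $a$ to two applications of the global covering result (Theorem~\ref{thm:global_covering}), glued together by Pascal's identity. The guiding observation is that a $k$-hefty simplex of $A$ containing a generic point $y$ sitting very close to $a$ must also contain $a$ in its closure; by genericity, $a$ is then forced to be either a vertex of that simplex (the case the theorem counts) or a strictly interior point of it (the case I will subtract off using the global theorem applied to $A \setminus \{a\}$).

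First, I would invoke Lemma~\ref{lem:hefty_simplices_have_bounded_radius} at $x=a$ to obtain a radius $R$ such that every $k$-hefty simplex whose circumscribed sphere encloses $a$ has its vertices in the finite set $A \cap B(a,R)$. Among the finitely many $d$-simplices spanned by this vertex set, only three kinds can meet a sufficiently small ball around $a$: those with $a$ as a vertex, those containing $a$ in their interior, and those whose closure is at positive distance from $a$. I would then fix $\ee > 0$ small enough that $B(a,\ee)$ avoids every simplex of the third type. For a generic $y \in B(a,\ee)$, let $N_1$ denote the number of $k$-hefty simplices of $A$ that contain $a$ in their interior and let $N_2(y)$ denote the number of $k$-hefty simplices of $A$ containing $y$ with $a$ as a vertex. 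Genericity rules out $a$ lying on a proper face of any simplex spanned by $d$ other points of $A$, so every $k$-hefty simplex containing $y$ is counted exactly once in $N_1 + N_2(y)$, and Theorem~\ref{thm:global_covering} forces $N_1 + N_2(y) = \binom{d+k}{d}$.

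To evaluate $N_1$, I would use the standard fact that the open interior of a $d$-simplex lies strictly inside the ball bounded by its circumscribed sphere (convexity of the ball combined with affine independence of the vertices). Together with the genericity assumption that $a$ does not lie on any sphere through $d+1$ other points of $A$, this shows that every simplex counted by $N_1$ has vertices in $A \setminus \{a\}$, contains $a$ strictly inside its circumscribed ball, and therefore contains exactly $k-1$ other points of $A \setminus \{a\}$ in that ball. Equivalently, these are precisely the $(k-1)$-hefty simplices of $A \setminus \{a\}$ that contain $a$. The set $A \setminus \{a\}$ is itself a generic thin Delone set and $a$ is generic with respect to it, so Theorem~\ref{thm:global_covering} applied to $A \setminus \{a\}$ at $a$ with parameter $k-1$ gives $N_1 = \binom{d+k-1}{d}$, with the usual convention that this is $0$ when $k = 0$. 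Pascal's identity then delivers
\[
  N_2(y) \;=\; \binom{d+k}{d} - \binom{d+k-1}{d} \;=\; \binom{d+k-1}{d-1},
\]
which is the asserted local covering number.

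The hard part will be the bookkeeping in the first step: one must verify, uniformly over generic $y$ near $a$, that every $k$-hefty simplex meeting $B(a,\ee)$ lies in exactly one of the two classes counted by $N_1$ and $N_2(y)$, with no simplex lost at a face or double-counted. Lemma~\ref{lem:hefty_simplices_have_bounded_radius} confines this to a finite combinatorial check, after which the proof collapses to a two-line count and a Pascal identity.
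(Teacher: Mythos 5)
Your proposal is correct and follows essentially the same route as the paper's proof: fix a generic $y$ near $a$, apply Theorem~\ref{thm:global_covering} to $A$ at $y$ to get $\binom{d+k}{d}$ simplices, identify the ones not having $a$ as a vertex with the $(k-1)$-hefty simplices of $A \setminus \{a\}$ containing $a$ (counted as $\binom{d+k-1}{d}$ by a second application of the global theorem), and subtract via Pascal's identity. Your extra care with Lemma~\ref{lem:hefty_simplices_have_bounded_radius} and the choice of $\ee$ only makes explicit what the paper compresses into ``sufficiently close to $a$.''
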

\begin{proof}
  The case $k=0$ is that of the Delaunay triangulation of $A$.
  Its top-dimensional simplices cover $\Rspace^d$ exactly once, and thus also every neighborhood of $a \in A$ exactly once.
  We therefore assume $k > 0$. 
  Let $A' = A \setminus \{a\}$.
  Observe that $A'$ is also a generic thin Delone set and $a$ is generic with respect to $A'$.
  We can therefore apply Theorem~\ref{thm:global_covering} to the $(k-1)$-hefty simplices of $A'$ that contain $a \in \Rspace^d$.
  By Theorem~\ref{thm:global_covering}, there are $\binom{d+k-1}{d}$ such simplices and each of them contains a small neighborhood of $a$.

  \smallskip
  Let $y$ be a point in a sufficiently small neighborhood of $a$ that is inside the intersection of the $\binom{d+k-1}{d}$ $(k-1)$-hefty simplices of $A'$ containing $a$, and assume that $y$ is generic with respect to $A$.
  There are exactly $\binom{d+k}{d}$ $k$-hefty simplices of $A$ that contain $y$.
  These $\binom{d+k}{d}$ simplices split into those incident and not incident to $a$.
  Since $y$ is sufficiently close to $a$, the latter are the $(k-1)$-hefty simplices of $A'$ that contain $a$.
  The circumscribed sphere of each such simplex encloses exactly $k$ points of $A$ but only $k-1$ points of $A'$, which implies that the number of $k$-hefty simplices of $A$ that contain $y$ and are not incident to $a$ is $\binom{d+k-1}{d}$.
  It follows that the number of $k$-hefty simplices of $A$ that contain $y$ and are incident to $a$ is $\binom{d+k}{d}-\binom{d+k-1}{d}=\binom{d+k-1}{d-1}$, as claimed.
\end{proof}

%%%%%%%%%%%%%%%%%%%%%%%%%%%%%%%%%%%%%%%%%%%%%%%
\subsection{Finitely Many Points}
\label{sec:2.4}
%%%%%%%%%%%%%%%%%%%%%%%%%%%%%%%%%%%%%%%%%%%%%%%

For a finite set, $A \subseteq \Rspace^d$, the $k$-th covering number predicted by Theorem~\ref{thm:global_covering} applies sufficiently deep inside the set but acts only as an upper bound near the fringes.
To formalize this claim, we introduce a parametrized generalization of the convex hull: for each integer $k \geq 0$, the \emph{$k$-hull} of $A$, denoted $\Hull{k}{A}$, is the common intersection of all closed half-spaces that miss at most $k$ of the points in $A$.
Clearly, $\Hull{0}{A} = \conv{A}$, and $\Hull{k}{A} \supseteq \Hull{k+1}{A}$ for every $k$.
It is also easy to see that $\Hull{k}{A}$ is contained in the convex hull of $A$ after removing at most $k$ of its points, and indeed $\Hull{k}{A}$ \emph{is} the intersection of all such convex hulls.
By the Centerpoint Theorem of discrete geometry \cite[Section~4.1]{Ede87}, $\Hull{k}{A} \neq \emptyset$ if $k < \frac{n}{d+1}$, in which $n = \card{A}$ is the number of points in $A$.
\begin{theorem}
  \label{thm:covering_for_finite_sets}
  Let $A \subseteq \Rspace^d$ be finite and generic, $k \geq 0$ an integer, and $a \in A$.
  Then every $x \in \Rspace^d$ that is generic with respect to $A$ is covered by at most $\binom{d+k}{d}$ $k$-hefty simplices of $A$, with equality iff $x \in \Hull{k}{A}$, and if $x$ is sufficiently close to $a$, then it is covered by at most $\binom{d+k-1}{d-1}$ $k$-hefty simplices incident to $a$.
\end{theorem}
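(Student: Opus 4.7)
The plan is to reduce the finite case to the thin Delone case (\cref{thm:global_covering,thm:local_covering}) by embedding $A$ in a generic thin Delone superset. Fix $R > 0$ so that $A \cup \{x\} \subseteq B(x, R)$ and every $d$-simplex with vertices in $A$ has circumradius less than some finite $R'$ (both finite by the finiteness of $A$), and let $A^\ast = A \cup C$ where $C \subseteq \Rspace^d \setminus B(x, R + R')$ is a generic, coarsely dense point configuration, such as a suitably scaled perturbed lattice placed far from $A$. Since every circumscribed sphere of a $d$-simplex with vertices in $A$ lies inside $B(x, R + R')$ and is therefore disjoint from $C$, such a simplex is $k$-hefty for $A$ if and only if it is $k$-hefty for $A^\ast$. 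Applying \cref{thm:global_covering,thm:local_covering} to the generic thin Delone set $A^\ast$ gives exactly $\binom{d+k}{d}$ $k$-hefty simplices of $A^\ast$ containing $x$, and exactly $\binom{d+k-1}{d-1}$ of those incident to $a$ covering a small neighborhood of $a$; the $k$-hefty simplices of $A$ with the corresponding properties form subsets of these counts, yielding both upper bounds.

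Equality in the global bound holds precisely when every $k$-hefty simplex of $A^\ast$ containing $x$ has all vertices in $A$, and I would show this is equivalent to $x \in \Hull{k}{A}$. By choosing the density and distance of $C$ large enough, one first arranges that no $k$-hefty simplex of $A^\ast$ has more than one vertex in $C$: more far-away vertices would force the circumscribed sphere to be so large that it would enclose more than $k$ points of $C$, violating heftiness. For the forward direction, assume $x \in \Hull{k}{A}$ and suppose for contradiction that some $k$-hefty simplex $\sigma$ of $A^\ast$ containing $x$ has a single vertex $w \in C$, with remaining vertices $v_1, \dots, v_d \in A$. As $w$ recedes along its direction, the circumscribed sphere of $\sigma$ degenerates to the hyperplane $H$ affinely spanned by $v_1, \dots, v_d$, and its interior approaches the open half-space $H^w$ on the $w$-side of $H$. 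The $k$ enclosed points lie in $A \cap H^w$ (since $C$ is too far to be inside), so $|A \cap H^w| \le k$, while $x \in H^w$ because $\sigma \subseteq \overline{H^w}$. A small inward perturbation of $H$ then produces an open half-space containing $x$ with at most $k$ points of $A$, contradicting $x \in \Hull{k}{A}$.

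Conversely, if $x \notin \Hull{k}{A}$, a standard Carath\'eodory-type argument in discrete geometry produces $d$ affinely independent points $v_1, \dots, v_d \in A$ whose spanning hyperplane $H$ has the open side $H^-$ containing $x$ with $|A \cap H^-| \le k$, and such that the perpendicular projection of $x$ onto $H$ lies in $\conv{v_1, \dots, v_d}$. Choosing $w \in C$ far in the direction normal to $H$ pointing into $H^-$, the simplex $\conv{w, v_1, \dots, v_d}$ contains $x$, and its circumscribed sphere's interior approaches $H^-$ as $w$ recedes, enclosing at most $|A \cap H^-| \le k$ points of $A$ and (by choice of $C$) no points of $C$; small adjustments of $w$ and of the tilt of $H$ make the hefty number exactly $k$. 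The main obstacle will be making the degenerate-sphere-becomes-hyperplane arguments precise in both directions---specifically, verifying that the sphere's interior count indeed converges to the half-space count and that the witnessing $d$-tuple $v_1, \dots, v_d$ in the converse can always be extracted from $x \notin \Hull{k}{A}$---but both issues are handled using the genericity of $A$ and $x$ together with the freedom in constructing $C$. The local bound $\binom{d+k-1}{d-1}$ follows analogously by applying \cref{thm:local_covering} to $A^\ast$.
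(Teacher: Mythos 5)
Your reduction for the two upper bounds is exactly the paper's: embed $A$ in a generic thin Delone superset whose added points lie outside every circumscribed sphere of $d+1$ points of $A$, and invoke Theorems~\ref{thm:global_covering} and \ref{thm:local_covering}. The equality characterization, however, is where your argument breaks down, most seriously in the direction $x \notin \Hull{k}{A} \Rightarrow$ strict inequality. You try to exhibit the missing $k$-hefty simplex inside the coarsely dense superset $A^\ast = A \cup C$ as a pyramid $\conv{\{w, v_1, \dots, v_d\}}$ with apex $w \in C$, claiming its circumscribed sphere encloses ``no points of $C$ by choice of $C$.'' This cannot be arranged: as $w$ recedes, the circumscribed ball converges to the open half-space $H^-$, and coarse density of $A^\ast$ forces $C$ to have infinitely many points in \emph{every} open half-space, so for $w$ far enough the ball necessarily swallows points of $C$ near its boundary; there is no regime in which the ball is flat enough to capture only $A \cap H^-$ yet avoids $C$. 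Moreover, when $|A \cap H^-| < k$ strictly, the heft is locally constant under ``small adjustments of $w$ and of the tilt of $H$,'' so you cannot nudge it up to exactly $k$; and the position of $w$ at which the count along the pencil of spheres through $v_1, \dots, v_d$ happens to equal $k$ need not be a point of $C$. The paper sidesteps all of this by building a \emph{finite} auxiliary set $A''$: it adds exactly $k - \card{B}$ points inside $\Delta^+$ (outside all circumspheres of $A$) to bring the count up to $k$, plus one apex on a large sphere through the vertices of $\Delta$, and then applies the already-proved upper bound to $A''$.

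The forward direction ($x \in \Hull{k}{A} \Rightarrow$ equality) is closer to workable but still leaves two real steps open, which you yourself flag as ``the main obstacle.'' First, ruling out $k$-hefty simplices of $A^\ast$ containing $x$ with two or more vertices in $C$ is not the one-liner you give: a huge sphere through two far points of $C$ and enclosing $x$ is thin near its boundary, so showing it must enclose more than $k$ points of $C$ needs an actual estimate. Second, for a \emph{fixed} $w \in C$ the circumscribed set is a ball, not a half-space, and a ball enclosing $x$ with at most $k$ points of $A$ inside does not by itself contradict $x \in \Hull{k}{A}$; you need the quantitative statement that the sphere is within $O(R_A^2/\mathrm{dist}(x,C))$ of the hyperplane $H$ on the bounded region containing $A$, uniformly over the finitely many choices of $v_1,\dots,v_d$, so that $A \cap \interior{D} = A \cap H^w$. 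The paper's route is structurally different and avoids both issues: given a $k$-hefty simplex of $A'$ covering $x$ with inside-set $B$, it uses $x \in \conv{(A \setminus B)}$ and the \emph{uniqueness} of the Delaunay simplex of $A' \setminus B$ covering $x$ to conclude that this simplex must coincide with the Delaunay simplex of $A \setminus B$ covering $x$, hence has all vertices in $A$. I would adopt that argument, or at minimum supply the two missing estimates before relying on the degenerating-sphere picture.
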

\begin{proof}
  To prove the two upper bounds, we add points outside all circumscribed spheres of $d+1$ points in $A$ to construct a thin Delone set $A' \subseteq \Rspace^d$.
  This is possible because the union of balls bounded by such spheres is bounded.
  Hence, $A \subseteq A'$, and any $k$-hefty simplex of $A$ is also a $k$-hefty simplex of $A'$.
  Assuming $x$ is generic with respect to $A'$, Theorems~\ref{thm:global_covering} and \ref{thm:local_covering} imply that exactly $\binom{d+k}{d}$ $k$-hefty simplices of $A'$ cover $x$, and exactly $\binom{d+k-1}{d-1}$ are incident to $a$.
  Since some of the $k$-hefty simplices of $A'$ may not be $k$-hefty simplices of $A$, these two numbers are only upper bounds.
  We continue by proving that $k$-th covering number is the correct number of $k$-hefty simplices of $A$ that contain $x$ iff $x \in \Hull{k}{A}$.
    
  \smallskip
  ``$\Longleftarrow$''.
  We prove $x \in \Hull{k}{A}$ implies that every $k$-hefty simplex of $A'$ covering $x$ is also a $k$-hefty simplex of $A$.
  Since $A'$ has exactly $\binom{d+k}{d}$ $k$-hefty simplices that cover $x$, this will imply that $A$ has the same number of such simplices.
  Consider such a $k$-hefty simplex of $A'$, and let $B \subseteq A'$ be the points inside its circumscribed sphere.
  Since $x \in \Hull{k}{A}$, every closed half-space that misses at most $k$ points of $A$ contains $x$.
  But $\card{B} = k$, so $x$ belongs to the convex hull of $A \setminus B$.
  The top-dimensional simplices of $\Delaunay{}{A \setminus B}$ cover the convex hull once, hence there is a unique $0$-hefty simplex of $A \setminus B$ that covers $x$.
  All points of $A' \setminus A$ lie outside its circumscribed sphere, which implies that the same simplex is the unique $0$-hefty simplex of $A'\setminus B$ that covers $x$.
  We have $B \subseteq A$ because the vertices of the simplex are in $A$, and all vertices we removed must come from $A$ as well.
  So this is a $k$-hefty simplex of $A$.

  \smallskip
  ``$\Longrightarrow$''.
  Assuming $x \not\in \Hull{k}{A}$, we modify the construction of $A'$ to show that there is a set $A'' \supseteq A$ such that all $k$-hefty simplices of $A$ are $k$-hefty simplices of $A''$ but at least one $k$-hefty simplex of $A''$ that covers $x$ is not a $k$-hefty simplex of $A$.
  In contrast to $A'$, the set $A''$ will be finite.
  Since $x \not\in \Hull{k}{A}$, there are $d$ points in $A$ such that the open half-space bounded by the hyperplane passing through these points that contains $x$ contains at most $k$ points of $A$.
  Let $\Delta$ be the $(d-1)$-simplex spanned by the $d$ points, write $\Delta^+$ for the open half-space that contains $x$, and let $B \subseteq A$ be the points in $\Delta^+$.
  By assumption, $\card{B} \leq k$.
  We get $A''$ by adding $1 + k - \card{B}$ points to $A$ as follows.
  First, we add $k-\card{B}$ points in $\Delta^+$ but outside all circumscribed spheres of $d+1$ points in $A$.
  After that, we add a point $y \in \Rspace^d$ so that the $d$-simplex that is the pyramid with apex $y$ and base $\Delta$ contains $x$, and all
  other $k$ points in $\Delta^+$ are inside the circumscribed sphere of this $d$-simplex. 
  We also require that $y$ is outside all circumscribed spheres of $d+1$ points in $A$.
  It is clear that such a point $y$ exists on a sufficiently large sphere that passes through all vertices of $\Delta$.
  As proved above, there are at most $\binom{d+k}{d}$ $k$-hefty simplices of $A''$ that cover $x$.
  The pyramid with apex $y$ and base $\Delta$ is such a simplex, but it is not a $k$-hefty simplex of $A$.
  Hence, the number of $k$-hefty simplices of $A$ that cover $x$ is strictly less than $\binom{d+k}{d}$, as claimed.
\end{proof}

Observe that the argument in part ``$\Longleftarrow$'' implies that once we fix a subset $B \subseteq A$ of size $k$, then the $k$-hefty simplices of $A$ that have exactly these points inside their circumscribed spheres have disjoint interiors.
This is because they are $0$-hefty simplices of $A \setminus B$.

%%%%%%%%%%%%%%%%%%%%%%%%%%%%%%%%%%%%%%%%%%%%%%%
\subsection{Weighted Points}
\label{sec:2.5}
%%%%%%%%%%%%%%%%%%%%%%%%%%%%%%%%%%%%%%%%%%%%%%%

This section extends the results on covering with $k$-hefty simplices from points to weighted points in $\Rspace^d$.
This more general setting has a long history within mathematics and appears among other topics in the study of packings and coverings \cite{Rog64}; see \cite{Aur87} for a survey.

\smallskip
To introduce the formalism, we let $\AAA \subseteq \Rspace^d \times \Rspace$ be a locally finite subset of $\Rspace^d$ in which each point is assigned a real weight.
We use notation to distinguish a \emph{weighted point}, $\aaa = (a, w_a)$, from its \emph{location}, $a \in \Rspace^d$, and we write $w_a \in \Rspace$ for its \emph{weight}.
The \emph{power} or \emph{weighted squared distance} of a point $x \in \Rspace^d$ from $\aaa$ is $\wdist{a}{x} = \Edist{x}{a}^2 - w_a$.
A convenient geometric picture of $\aaa$ is the sphere with center $a \in \Rspace^d$ and radius $r_a = \sqrt{w_a}$, which is either positive real, zero, or a positive real multiple of the imaginary unit.
Assuming $w_a > 0$, we have $\wdist{a}{x} = 0$ iff $x$ lies on this sphere.
Given two weighted points, $\aaa = (a, r_a^2)$ and $\bbb = (b, r_b^2)$\footnote{Because of the geometric picture with spheres, we find it more intuitive to talk about the radius rather than the weight, with the implicit understanding that the square of the radius can also be negative.} with $a\neq b$, the \emph{bisector} of $\aaa$ and $\bbb$ is the hyperplane of points $x \in \Rspace^d$ that satisfy
\begin{align}
    \wdist{a}{x} - \wdist{b}{x}
    &=2 \scalprod{b-a}{x} - \left( \norm{a}^2 - r_a^2 - \norm{b}^2 + r_b^2 \right)
    = 0 .
\end{align}
It is normal to $b-a$, but note that the point at which it intersects the line passing through $a$ and $b$ is not necessarily halfway between these points.
While the bisector of two weighted points is a $(d-1)$-plane (a hyperplane), the set of points at equal power from $p+1 \leq d+1$ weighted points is generically a $(d-p)$-plane.
The only requirement for the existence of this $(d-p)$-plane is that the $p+1$ locations do not lie on a common $(p-1)$-plane.
Specifically, for $p+1 = d+1$ weighted points with affinely independent locations, this set is a single point.

\begin{figure}[hbt]
  \centering
  \vspace{0.05in}
  \resizebox{!}{2.0in}{\input{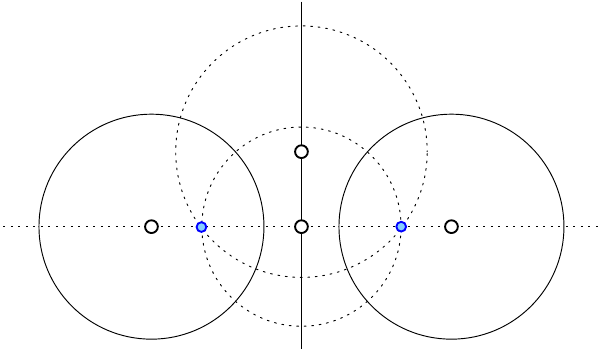_t}}
  \caption{\footnotesize Given two weighted points, $\aaa_0$ and $\aaa_1$, the circles of all weighted points orthogonal to both pass through the \emph{blue} $0$-sphere (pair of points) on the line that passes through $a_0$ and $a_1$.}
  \label{fig:pencil}
\end{figure}
We call $\aaa = (a, r_a^2)$ and $\bbb = (b, r_b^2)$ \emph{orthogonal} to each other if $\Edist{a}{b}^2 = r_a^2 + r_b^2$.
By construction, for every point $x$ with equal power from $p+1 \leq d+1$ weighted points, $\aaa_0, \aaa_1, \ldots, \aaa_p$, there is a unique weight, $w_x = r_x^2$, such that $\xxx = (x, r_x^2)$ is orthogonal to each $\aaa_i$, for $0 \leq i \leq p$.
Let $z$ be the point that minimizes the common power from $\aaa_0, \aaa_1, \ldots, \aaa_p$, and note that it is the unique point at which the $(d-p)$-plane of points $x$ with equal power from $a_0,a_1,\ldots,a_p$ intersects the $p$-plane that passes through $a_0, a_1, \ldots, a_p$.
Let $S$ be the $(p-1)$-sphere in which the sphere of $\zzz = (z, r_z^2)$ intersects the $p$-plane.
Importantly, the sphere of every weighted point orthogonal to each $\aaa_i$, for $0 \leq i \leq p$, passes through $S$; see Figure~\ref{fig:pencil} for an illustration for $p+1 = 2$ weighted points in $\Rspace^2$.
We formally state a generalization of this claim that allows for non-positive weights, and prove it for later reference.
\begin{lemma}
  \label{lem:orthogonal_pencil}
  Let $\aaa_0, \aaa_1, \ldots, \aaa_p$ be $p+1 \leq d+1$ generic weighted points in $\Rspace^d$, and $\zzz = (z, r_z^2)$ the weighted point orthogonal to $\aaa_i$, for $0 \leq i \leq p$, with minimum weight.
  Then $\Edist{x}{z}^2 = r_x^2 - r_z^2$ for every weighted point $\xxx = (x, r_x^2)$ that is orthogonal to $\aaa_i$ for $0 \leq i \leq p$.
\end{lemma}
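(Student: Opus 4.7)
The plan is to translate the orthogonality condition into an equi-power statement, identify the admissible locations as an affine subspace, and then recognize the claimed identity as a Pythagorean decomposition for a quadratic restricted to that subspace.

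First I would rewrite orthogonality: $\xxx = (x, r_x^2)$ is orthogonal to $\aaa_i = (a_i, r_{a_i}^2)$ exactly when $\Edist{x}{a_i}^2 = r_x^2 + r_{a_i}^2$, i.e., when $\wdist{a_i}{x} = r_x^2$. So $\xxx$ is orthogonal to every $\aaa_i$ iff the power $\wdist{a_i}{x}$ takes the same value for $i = 0, 1, \ldots, p$, and that common value determines $r_x^2$. By genericity of $\aaa_0, \ldots, \aaa_p$, the common equi-power locus $L = \{x \in \Rspace^d \mid \wdist{a_0}{x} = \cdots = \wdist{a_p}{x}\}$ is a $(d-p)$-plane, as discussed in the excerpt immediately before the lemma. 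Hence the locations of orthogonal weighted points are exactly the points of $L$, and the weight at such a location is $r_x^2 = \Edist{x}{a_0}^2 - r_{a_0}^2$.

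Next I would identify $z$ geometrically. Since $r_x^2$ differs from $\Edist{x}{a_0}^2$ by the constant $-r_{a_0}^2$, minimizing the weight over $x \in L$ is the same as minimizing the Euclidean distance from $a_0$ to points of $L$. The minimizer is the foot of the perpendicular from $a_0$ to $L$, which I would take to be $z$, with induced weight $r_z^2$. The key geometric fact is that $a_0 - z$ is perpendicular to $L$, so for every $x \in L$ the vectors $a_0 - z$ and $x - z$ are orthogonal. The Pythagorean theorem applied in the triangle with vertices $a_0$, $z$, $x$ then gives $\Edist{x}{a_0}^2 = \Edist{x}{z}^2 + \Edist{z}{a_0}^2$ for every $x \in L$. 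Subtracting $r_{a_0}^2$ from both sides and using $r_x^2 = \Edist{x}{a_0}^2 - r_{a_0}^2$ and $r_z^2 = \Edist{z}{a_0}^2 - r_{a_0}^2$ yields $r_x^2 - r_z^2 = \Edist{x}{z}^2$, which is the claim.

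I do not expect a substantial obstacle; the argument is purely algebraic once orthogonality is unpacked. The only subtlety worth flagging is that weights, and therefore $r_z^2$ and $r_x^2$, may be non-positive, so the geometric ``sphere'' picture breaks down in the imaginary-radius case; however, no step above uses any sign, and the identity holds as a purely algebraic statement about powers. As a bonus, the nonnegativity of $\Edist{x}{z}^2$ implies $r_x^2 \geq r_z^2$ for every orthogonal weighted point $\xxx$, which confirms that $z$ indeed realizes the minimum weight and so is consistent with the description of $\zzz$ in the statement.
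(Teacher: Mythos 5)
Your proof is correct and follows essentially the same route as the paper's: both identify $x$ and $z$ as lying on the equi-power $(d-p)$-plane, observe the right angle at $z$, and apply the Pythagorean theorem together with the orthogonality relations to obtain $\Edist{x}{z}^2 = r_x^2 - r_z^2$. Your explicit derivation that $z$ is the foot of the perpendicular from $a_0$ (equivalently, the intersection with the $p$-plane through the $a_i$) is a welcome clarification of a point the paper states without proof.
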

\begin{proof}
  By definition, $x$ and $z$ both lie on the $(d-p)$-plane of points with equal power from the $p+1$ weighted points.
  This $(d-p)$-plane is normal to $b-a$, which implies that $x, z, a_i$ are the vertices of a right-angled triangle, so $\Edist{x}{a_i}^2 = \Edist{x}{z}^2 + \Edist{z}{a_i}^2$ for each $0 \leq i \leq p$.
  To avoid double-indices, write $r_i^2$ for the weight of $\aaa_i$.
  Since $\xxx$ and $\zzz$ are both orthogonal to $\aaa_i$, we also have $\Edist{x}{a_i}^2 = r_x^2 + r_i^2$ and $\Edist{z}{a_i}^2 = r_z^2 + r_i^2$.
  Plugging the last two relations into the first, we get $\Edist{x}{z}^2 = \Edist{x}{a_0}^2 - \Edist{z}{a_0}^2 = r_x^2 - r_z^2$, as claimed.
\end{proof}

We now return to main topic of this section.
Writing $A \subseteq \Rspace^d$ for the projection of $\AAA \subseteq \Rspace^d \times \Rspace$, we call $\AAA$ a \emph{thin Delone set of weighted points} if $A$ is thin Delone and the absolute weights of the points are bounded away from $\infty$.
Furthermore, $\AAA$ is \emph{generic} if $A$ is generic and for every $d+1$ weighted points there is a unique and distinct weighted point that is orthogonal to each of the $d+1$ weighted points.
The $d$-simplex defined by $d+1$ weighted points is the convex hull of their locations, and the \emph{heft} of this simplex is the number of weighted points $\bbb = (b, r_b^2) \in \AAA$ that satisfy $\Edist{x}{b}^2 - r_x^2 - r_b^2$ < 0, in which $\xxx = (x, r_x^2)$ is the unique weighted point orthogonal to the $d+1$ weighted points.
\begin{theorem}
  \label{thm:covering_for_weighted_points}
  Let $\AAA \subseteq \Rspace^d \times \Rspace$ be a generic thin Delone set of weighted points, and $a$ the projection of a weighted point $\aaa \in \AAA$ to $\Rspace^d$.
  Then the simplices with heft $k$ cover $\Rspace^d$ exactly $\binom{d+k}{d}$ times, and the simplices with heft at most $k$ incident to $a$ cover every sufficiently small neighborhood of $a$ at most $\binom{d+k}{d}$ times.
\end{theorem}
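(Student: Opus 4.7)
The plan is to adapt the three-step strategy used for Theorem~\ref{thm:global_covering} for the global covering claim, and then derive the local upper bound by a combinatorial accounting applied to the reduced set $\AAA' = \AAA \setminus \{\aaa\}$.

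For the global claim, I would first prove a weighted analog of Lemma~\ref{lem:hefty_simplices_have_bounded_radius}: using that the absolute weights are bounded and $A$ is coarsely dense, for every $x \in \Rspace^d$ there is $R = R(x,\AAA,k)$ such that any heft-$\leq k$ simplex whose orthogonal weighted point gives $x$ negative power has its orthogonal location inside $B(x,R)$. With this in hand, the three steps of Theorem~\ref{thm:global_covering} transfer. (a) To show that the cover count is locally constant, move $x$ across a facet $\Delta$ with weighted vertices $\aaa_0, \ldots, \aaa_{d-1}$; by Lemma~\ref{lem:orthogonal_pencil}, the weighted points orthogonal to $\Delta$ are $\xxx_y = (y, r_y^2)$ with $y$ ranging over a line $L$ normal to the affine hull of $\Delta$ and satisfying $r_y^2 - r_z^2 = \Edist{y}{z}^2$. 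For each $\bbb \in \AAA$ the power $\wdist{b}{y}$ is affine in the parameter along $L$, so each $\bbb$ contributes at most one transition; coarse density and bounded weights force heft to tend to $+\infty$ in both directions along $L$, so the counts of transitions of the two kinds balance exactly as in Step~1 of Theorem~\ref{thm:global_covering}, matching heft-$k$ simplices with facet $\Delta$ on the two sides of the affine hull of $\Delta$. (b) The translate-and-combine construction of Step~2 shows the count is independent of $\AAA$. (c) Setting all weights to zero reduces to the unweighted radial example of Step~3, which yields the value $\binom{d+k}{d}$.

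For the local claim, fix a point $y$ generic with respect to $\AAA$ in a small enough neighborhood of $a$ that every $j$-hefty simplex of $\AAA'$ with $j \leq k$ containing $a$ also contains $y$. Write $M_j$, $N_j$, $N'_j$ for the numbers of $j$-hefty simplices containing $y$ that are, respectively, incident to $a$ in $\AAA$, not incident to $a$ in $\AAA$, and in $\AAA'$; the global claim applied to $\AAA$ and to $\AAA'$ gives $M_j + N_j = \binom{d+j}{d}$ and $N'_j = \binom{d+j}{d}$. A simplex counted by $N_j$ has all vertices in $A'$ and hence coincides with a simplex of $\AAA'$ containing $a$; its heft in $\AAA$ equals its heft in $\AAA'$ plus $\delta \in \{0,1\}$, where $\delta = 1$ iff $\aaa$ has negative power from the orthogonal weighted point of the simplex. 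Splitting $N'_j = P_j + Q_j$ according to $\delta$, we get $N_j = P_j + Q_{j-1}$ with the convention $Q_{-1} = 0$. Summing and applying the hockey-stick identity $\sum_{j=0}^{k} \binom{d+j}{d} = \binom{d+k+1}{d+1}$ together with Pascal's relation $\binom{d+k+1}{d+1} - \binom{d+k}{d+1} = \binom{d+k}{d}$ yields
\[
\sum_{j=0}^{k} M_j \;=\; \binom{d+k}{d} - P_k \;\leq\; \binom{d+k}{d},
\]
which is the asserted local bound.

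The main obstacle is step (a): the weighted ``inside'' relation is defined by a power condition rather than by set-theoretic containment in a sphere, and the bisector of two weighted points is not the perpendicular bisector of their locations. The bounded-weight hypothesis on $\AAA$ is essential for heft along $L$ to grow without bound in both directions, which drives the left--right balance of transitions; once this is verified, steps~(b) and (c) are routine adaptations, and the local counting reduces to the two elementary binomial identities above.
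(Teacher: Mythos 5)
Your proposal is correct and follows essentially the same route as the paper: the global count is established by the same three steps (weighted bounded-radius lemma, the facet-crossing transition count via Lemma~\ref{lem:orthogonal_pencil}, independence of $\AAA$ by juxtaposition, and the zero-weight radial example), and the local bound comes from comparing the covers of $\AAA$ and $\AAA\setminus\{\aaa\}$ using that removing $\aaa$ decreases each heft by at most one. Your bookkeeping with $P_j$ and $Q_j$ is a slightly more explicit version of the paper's inequality $\sum_{j=0}^{k-1}\binom{d+j}{d}$ for the non-incident simplices, yielding the same bound $\binom{d+k}{d}$.
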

\begin{proof}
  The proof of the global covering constant is similar to that of Theorem~\ref{thm:global_covering}, and we present abridged versions of its three steps.
  Let $x \in \Rspace^d$ be generic with respect to $A$, and write $\cover{k}{x,\AAA}$ for the number of $k$-hefty simplices of $\AAA$ that contain $x$.

  \smallskip
  In Step~I we show that $\cover{k}{x,\AAA}$ does not depend on $x$.
  Indeed, if we move $x$ continuously it can possibly change only when $x$ crosses a facet of some $k$-hefty simplex, but even then $\cover{k}{x,\AAA}$ is preserved, as we now show.
  Let $\Delta$ be such a facet and $L$ the line of points with equal power from the weighted points that project to the vertices of $\Delta$.
  For each generic point $y \in L$, mark $y$ with the number of weighted points $\bbb \in \AAA$ such that $\wdist{b}{y} < \wdist{a}{y}$ for each vertex $a$ of $\Delta$.
  Equivalently, $\Edist{b}{y} < r_b^2 + r_y^2$, in which $\yyy = (y, r_y^2)$ is the unique sphere with center $y$ that is orthogonal to the spheres whose centers are the vertices of $\Delta$.
  This splits $L$ into open segments marked with integer values, and the number of $k$-hefty simplices that share $\Delta$ is the number of segments labeled $k$.
  Now use Lemma~\ref{lem:orthogonal_pencil} while walking along $L$ from left to right.
  A $k$-hefty simplex with facet $\Delta$ lies to the left of $\Delta$ if it corresponds to a transition from a segment marked $k+1$ on the left to one marked $k$ on the right, and it lies to the right of $\Delta$ if the transition is the other way round.
  Since there are equally many transitions of either kind, there are equally many $k$-hefty simplices that share $\Delta$ on both sides.
  Hence, $\cover{k}{x,\AAA}$ is independent of $x$.

  \smallskip
  In Step~II we show that $\cover{k}{x,\AAA}$ does not depend on $\AAA$ either, only on the dimension of the ambient space.
  This step requires a generalization of Lemma~\ref{lem:hefty_simplices_have_bounded_radius} to weighted points, which is easy because all absolute weights are bounded by a constant.
  We can therefore take bounded neighborhoods of two points, $x$ and $x'$, place them without overlap next to each other, and prove $\cover{k}{x,\AAA} = \cover{k}{x',\AAA'}$ as in the first step.
  In Step III we re-use the radial set of points displayed in Figure~\ref{fig:radial}.
  The unweighted points are weighted points with weight $0$, so we finally conclude that $\cover{k}{x,\AAA} = \binom{d+k}{d}$, as claimed.

  \smallskip
  Finally, we prove the inequality for local covering.
  After removing $\aaa$ from $\AAA$, the global covering constant proved above implies that for every $0 \leq j \leq k-1$, exactly $\binom{d+j}{d}$ $j$-hefty simplices of $\AAA \setminus \{ \aaa \}$ contain the point $a$.
  Adding $\aaa$ back, the heft of each of these simplices increases by at most $1$, so the number of simplices of heft at most $k$ that contain $a$ is at least
  \begin{align}
    M_{k-1} &= \tbinom{d+0}{d} + \tbinom{d+1}{d} + \ldots + \tbinom{d+k-1}{d} .
  \end{align}
  Indeed, it is also possible that a $k$-hefty simplex of $\AAA \setminus \{ \aaa \}$ is only $k$-hefty for $\AAA$, but this does not invalidate the derived inequality.
  The simplices of heft at most $k$ cover $\Rspace^d$ exactly $M_k$ times, so the subset of simplices that are incident to $a$ cover any sufficiently small neighborhood of $a$ at most $M_k - M_{k-1} = \binom{d+k}{d}$ times.
\end{proof}

The upper bound for local covering in Theorem~\ref{thm:covering_for_weighted_points} is tight, as we will see shortly.
Indeed, if there were a smaller upper bound, then its application to proving an upper bound for $\Psi_d (k,n)$ in the next subsection would contradict the lower bound illustrated in Figure~\ref{fig:hexmesh}.
In other words, that lower bound construction can be turned into a lower bound construction for the local covering question, and we leave the details to the interested reader.

%% \newpage
%%%%%%%%%%%%%%%%%%%%%%%%%%%%%%%%%%%%%%%%%%%%%%%
%%%%%%%%%%%%%%%%%%%%%%%%%%%%%%%%%%%%%%%%%%%%%%%
\section{Hefty Simplices on the Sphere}
\label{sec:3}
%%%%%%%%%%%%%%%%%%%%%%%%%%%%%%%%%%%%%%%%%%%%%%%
%%%%%%%%%%%%%%%%%%%%%%%%%%%%%%%%%%%%%%%%%%%%%%%

In this section, we extend the results of Section~\ref{sec:2} from the Euclidean to the spherical setting.
To a large extent, the proofs are similar to those in Section~\ref{sec:2}, so we limit ourselves to highlighting the differences.

%%%%%%%%%%%%%%%%%%%%%%%%%%%%%%%%%%%%%%%%%%%%%%%
\subsection{(Unweighted) Points}
\label{sec:3.1}
%%%%%%%%%%%%%%%%%%%%%%%%%%%%%%%%%%%%%%%%%%%%%%%

The results in Theorems~\ref{thm:global_covering} and \ref{thm:local_covering} can be extended to finite sets in $\Sspace^d$.
Note however that in the spherical setting, there is an ambiguity in the definition of a $k$-hefty simplex since any $(d-1)$-sphere on the $d$-sphere bounds two complementary $d$-balls.
We thus require that the finite set satisfies the following condition.
\begin{definition}
  \label{dfn:balanced_sets}
  A set $A \subseteq \Sspace^d$ is \emph{$k$-balanced} if every open hemisphere contains at least $k+1$ points of $A$.
\end{definition}
With this assumption, only the smaller of the two open $d$-balls has a chance to contain $k$ and no more of the points, so we consider it the \emph{inside} of the $(d-1)$-sphere.
The notions of genericity and heft are straightforward adaptions of those introduced in Euclidean space.
In particular, a finite set $A \subseteq \Sspace^d$ is \emph{generic} if no $d+1$ points belong to a common $(d-1)$-dimensional great-sphere, and no $d+2$ points belong to a common $(d-1)$-dimensional sphere in $\Sspace^d$.
Equivalently, no $d+1$ of the points (unit vectors) are linearly dependent, and no $d+2$ are affinely dependent in $\Rspace^{d+1}$.
Furthermore, a point $a \in \Sspace^d$ is \emph{generic with respect to} $A$ if $A \cup \{a\}$ is generic.
The \emph{heft} of a spherical $d$-simplex spanned by $d+1$ points in $A$ is the number of points inside the unique $(d-1)$-sphere that passes through the $d+1$ points, and we call it a \emph{$k$-hefty simplex} of $A$ if this number is $k$.
\begin{theorem}
  \label{thm:covering_on_sphere}
  Let $k$ be a non-negative integer, $A \subseteq \Sspace^d$ a finite generic $k$-balanced set, and $a \in A$.
  Then the $k$-hefty simplices of $A$ cover $\Sspace^d$ exactly $\binom{d+k}{k}$ times, and the subset of these simplices incident to $a$ cover every sufficiently small neighborhood of $a$ exactly $\binom{d+k-1}{d-1}$ times.
\end{theorem}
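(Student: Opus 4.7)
The plan mirrors the three-step structure used in the proofs of Theorems~\ref{thm:global_covering} and~\ref{thm:local_covering}, with modifications to accommodate the compactness of $\Sspace^d$ and the fact that the ``inside'' of a sphere in the pencil through a facet may swap caps as the parameter sweeps.

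Step~1. I would show that $\cover{k}{x,A}$ is constant for every generic $x \in \Sspace^d$ by moving $x$ along a generic path and arguing invariance at each crossing of a facet $\Delta$ of a $k$-hefty simplex. Let $L$ be the one-parameter family of $(d-1)$-spheres on $\Sspace^d$ passing through the $d$ vertices of $\Delta$, parameterized by the pole of the sphere tracing a great circle on $\Sspace^d$. Let $N(\theta)$ denote the number of $A$-points on the pole side of the hyperplane cutting out $S(\theta)$ from $\Sspace^d$. As $\theta$ traverses $L$, the function $N$ changes by $\pm 1$ at each generic transition (when the sphere passes through an $A$-point), and since $L$ is a closed loop $N$ returns to its starting value. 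By correlating the direction of each transition with which side of $\Delta$ the crossing point lies on (relative to a fixed reference sphere in the pencil), I obtain that the number of $k$-hefty simplices with facet $\Delta$ is the same on both sides of $\Delta$. Crucially, the $k$-balanced hypothesis guarantees that $N(\theta) \geq k+1$ at the two antipodal positions on $L$ where $S(\theta)$ degenerates to a great sphere, so no $k$-hefty transitions occur at the cap-swap positions and the accounting remains clean.

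Step~2. As in the proof of Theorem~\ref{thm:global_covering}, Step~1 implies that $\cover{k}{x,A}$ depends only on $d$ and $k$, by patching two configurations inside small disjoint spherical caps into a single $k$-balanced set. To evaluate the constant, I would concentrate a generic $k$-balanced set $A$ in a small spherical cap and apply stereographic projection from the antipodal pole: in the small-cap regime this projection preserves the combinatorics of circumscribed spheres and the notion of ``inside'' (smaller cap), so the Euclidean Theorem~\ref{thm:global_covering} yields the value $\binom{d+k}{d} = \binom{d+k}{k}$.

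Step~3. The local claim follows from the global one as in the proof of Theorem~\ref{thm:local_covering}. Set $A' = A \setminus \{a\}$; this set is generic and $(k-1)$-balanced since removing a single point decreases each open hemisphere count by at most one. The point $a$ is generic with respect to $A'$, so the global statement just established applied to $A'$ gives exactly $\binom{d+k-1}{d}$ $(k-1)$-hefty simplices of $A'$ containing $a$; each such simplex becomes a $k$-hefty simplex of $A$ that covers a neighborhood of $a$ and does not have $a$ as a vertex. For a generic $y$ close to $a$, the global part applied to $A$ gives $\binom{d+k}{d}$ $k$-hefty simplices of $A$ through $y$, and subtracting the $\binom{d+k-1}{d}$ that do not touch $a$ leaves $\binom{d+k}{d} - \binom{d+k-1}{d} = \binom{d+k-1}{d-1}$ $k$-hefty simplices incident to $a$.

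The main obstacle will be Step~1. Unlike in $\Rspace^d$, the pencil $L$ is a closed great circle rather than an unbounded line, and the ``smaller cap'' of $S(\theta)$ swaps sides at the two antipodal positions where $S(\theta)$ is a great $(d-1)$-sphere. Setting up the correspondence between transitions on $L$ and the two sides of $\Delta$ in a globally consistent way is where the argument genuinely differs from the Euclidean case; the $k$-balanced hypothesis is exactly what is needed to quarantine the cap-swap positions from the $k$-hefty count so that the closed-loop balance on $L$ cleanly yields the left-right equality of $k$-hefty simplices with facet $\Delta$.
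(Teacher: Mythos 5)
Your overall strategy for the global claim---redoing the three-step Euclidean argument (invariance under moving $x$, independence of $A$ by patching, evaluation on an explicit example) directly on $\Sspace^d$---is a genuinely different and much heavier route than the paper's. The paper proves the global count in a few lines: fix the query point $x'$, stereographically project from its antipode $x$ onto the tangent hyperplane at $x'$, observe that $k$-balancedness of $A$ translates into $x' \in \Hull{k}{A'}$ for the projected set $A'$, and invoke Theorem~\ref{thm:covering_for_finite_sets} to get exactly $\binom{d+k}{d}$ $k$-hefty simplices through $x'$. In other words, the projection idea you reserve for evaluating a single constant is, when applied pointwise at the query point rather than to a specially prepared configuration, already the entire proof; no spherical analogue of Steps~1 and~2 is needed. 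Your Step~3 (local covering via $A \setminus \{a\}$ being $(k-1)$-balanced and subtracting $\binom{d+k-1}{d}$ from $\binom{d+k}{d}$) coincides with the paper's argument.

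As written, your Step~2 contains a genuine error: a $k$-balanced set cannot be concentrated in a small spherical cap, since the open hemisphere disjoint from that cap would then contain no points of $A$, violating Definition~\ref{dfn:balanced_sets} even for $k=0$. So the proposed evaluation of the constant is vacuous; you would instead need either a spherical radial configuration that is itself $k$-balanced (as the paper does in the weighted case, Theorem~\ref{thm:covering_on_sphere_weighted_case}, Step~III) or---better---the pointwise stereographic reduction above, which makes Steps~1 and~2 unnecessary. Your Step~1 is plausible in outline (the closed pencil through the $d$ vertices of $\Delta$ has equally many up- and down-transitions, and $k$-balancedness keeps the label at least $k+1$ near the great-sphere member where the smaller cap swaps sides), but it is exactly the delicate part you defer, and together with the broken Step~2 the global claim is not yet established by your argument.
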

\begin{proof}
  We first prove the claim about global covering.
  Let $x, x' \in \Sspace^d$ be antipodal points on the sphere.
  We denote the hyperplane that touches $\Sspace^d$ in $x'$ by $\Rspace^d$.
  After stereographic projection from $x$ onto $\Rspace^d$, the set $A$ transforms into a finite set $A' \subseteq \Rspace^d$, and since $A$ is $k$-balanced, we have $x' \in \Hull{k}{A'}$.
  Furthermore, the stereographic projection maps $k$-hefty simplices of $A$ that contain $x' \in \Sspace^d$ bijectively (as vertex sets) to $k$-hefty simplices of $A'$ that contain $x' \in \Rspace^d$.
  By Theorem~\ref{thm:covering_for_finite_sets}, $A'$ has exactly $\binom{d+k}{d}$ $k$-hefty simplices of this kind, which implies that $A$ has the same number of $k$-hefty simplices that contain $x'$.

  \smallskip
  We second prove the claim about local covering.
  Note that removing $a$ gives a $(k-1)$-balanced set $A\setminus\{a\} \subseteq \Sspace^d$. 
  Using this, the proof proceeds verbatim as in Theorem~\ref{thm:local_covering}.
\end{proof}

%%%%%%%%%%%%%%%%%%%%%%%%%%%%%%%%%%%%%%%%%%%%%%%
\subsection{Weighted Points}
\label{sec:3.2}
%%%%%%%%%%%%%%%%%%%%%%%%%%%%%%%%%%%%%%%%%%%%%%%

We will need bounds on global and local covering with hefty simplices also for weighted points on the sphere.
We begin by extending the framework of weighted points from $\Rspace^d$ to $\Sspace^d$.
It is convenient to work with weights $w \in [-1, \infty)$ and corresponding radii $r = \sqrt{w+1}$, which are non-negative reals.
In particular, we specify a point of $\Rspace^{d+1}$ in polar coordinates, $\aaa = (a, r_a) \in \Sspace^d \times [0,\infty)$, call $\aaa$ a \emph{weighted point}, $a \in \Sspace^d$ its \emph{location}, and $w_a = r_a^2 - 1$ its \emph{weight}.
Note that $\aaa$ is on $\Sspace^d$ for $w_a = 0$, inside $\Sspace^d$ for $w_a < 0$, and outside $\Sspace^d$ for $w_a > 0$.
Similar to the Euclidean setting, we introduce the \emph{power} or \emph{weighted squared distance} of a point $x \in \Sspace^d$ from $\aaa$ as $\wdist{a}{x} = \Edist{x}{\aaa}^2 - w_a = \Edist{x}{r_a a}^2 - r_a^2 + 1$.
Given two weighted points, $\aaa = (a, w_a)$ and $\bbb = (b, w_b)$ with $a\neq b$, their bisector is the unique $(d-1)$-dimensional great-sphere contained in the $d$-plane with normal $\bbb - \aaa$ that passes through the origin of $\Rspace^{d+1}$.
Indeed, a point $x \in \Sspace^d$ belongs to the bisector iff
\begin{align}
  \wdist{a}{x} - \wdist{b}{x}
    &= \Edist{x}{\aaa}^2 - r_a^2 + 1 - \Edist{x}{\bbb}^2 + r_b - 1
    = 2 \scalprod{x}{\bbb - \aaa} = 0,
  \label{eqn:bisector}
\end{align}
which describes the mentioned great-sphere.
Like in $\Rspace^d$, for every weighted point there are infinitely many second weighted points that define the same bisector.
Indeed, if we are given $\aaa$ and any $(d-1)$-dimensional great-sphere of $\Sspace^d$, then this great-sphere is the bisector of $\aaa$ and $\bbb$ whenever $\bbb - \aaa$ is normal to the corresponding $d$-plane.
Furthermore, the direction of $\bbb - \aaa$ along the normal line decides on which side of this great-sphere the points have smaller power from $\bbb$ than from $\aaa$.

\smallskip
The heft of $d+1$ weighted points is defined in terms of the weighted points separated from the origin by the $d$-plane that passes through them.
Before we formalize this notion, we call a finite set $\AAA \subseteq \Sspace^d \times [0, \infty)$ \emph{$k$-balanced} if there are at least $k+1$ weighted points on each side of every $d$-plane that passes through the origin.
\begin{definition}
  \label{dfn:hefty_simplex_weighted_case}
  Let $\AAA \subseteq \Sspace^d \times [0, \infty)$ be a generic $k$-balanced set of weighted points.
  We call the spherical $d$-simplex spanned by $d+1$ locations a \emph{$k$-hefty simplex} of $\AAA$ if the $d$-plane that passes through the corresponding $d+1$ weighted points separates exactly $k$ of the remaining points in $\AAA$ from $0 \in \Rspace^{d+1}$.
\end{definition}
As illustrated in Figure~\ref{fig:htriangle}, the $d+1$ locations span a spherical simplex, which is the central projection of the $d$-simplex spanned by the corresponding $d+1$ weighted points, which are points in $\Rspace^{d+1}$.
\begin{figure}[hbt]
  \centering \vspace{0.13in}
  \resizebox{!}{2.3in}{\input{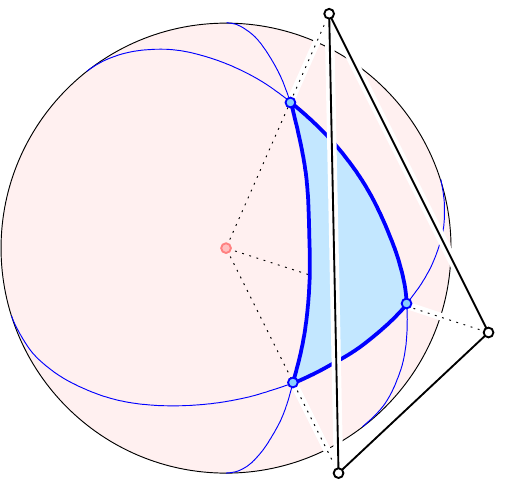_t}}
  \vspace{0.03in}
  \caption{\footnotesize The spherical triangle spanned by $a, b, c \in \Sspace^2$ is the central projection of the Euclidean triangle spanned by the corresponding points $\aaa, \bbb, \ccc \in \Rspace^3$.
  The spherical triangle is $k$-hefty if $k$ of the weighted points lie beyond the plane of the Euclidean triangle.}
  \label{fig:htriangle}
\end{figure}
\begin{theorem}
  \label{thm:covering_on_sphere_weighted_case}
  Let $\AAA \subseteq \Sspace^d \times [0, \infty)$ be a generic $k$-balanced set of finitely many weighted points, and $\aaa \in \AAA$.
  Then the spherical $d$-simplices with heft $k$ cover $\Sspace^d$ exactly $\binom{d+k}{d}$ times, and the spherical $d$-simplices with heft at most $k$ incident to $a$ cover every sufficiently small neighborhood of this point at most $\binom{d+k}{d}$ times.
\end{theorem}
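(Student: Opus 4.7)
The plan is to mirror the three-step proof of Theorem~\ref{thm:covering_for_weighted_points} on the sphere, and then close with the same telescoping argument to obtain the local covering bound. Write $\cover{k}{x,\AAA}$ for the number of spherical $k$-hefty simplices of $\AAA$ that contain a generic point $x \in \Sspace^d$; since $\AAA$ is finite, this is automatically a finite number, and no analogue of Lemma~\ref{lem:hefty_simplices_have_bounded_radius} is needed.

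For Step~I, I would show that $\cover{k}{x,\AAA}$ is constant on generic points by moving $x$ continuously and analysing what happens when $x$ crosses a facet $\Delta$ of a spherical $k$-hefty simplex.  In $\Rspace^{d+1}$, $\Delta$ is spanned by $d$ weighted points $\aaa_0,\ldots,\aaa_{d-1}$; the points on $\Sspace^d$ at equal power from them trace out a great circle $C$ (the intersection of $\Sspace^d$ with the $2$-dimensional orthogonal complement of the affine span of the corresponding centers).  For each generic $y \in C$, let $\yyy$ be the unique weighted point with location $y$ orthogonal to all $\aaa_i$, and label $y$ by the number of $\bbb \in \AAA$ that lie on the origin-free side of the $d$-plane through $\aaa_0,\ldots,\aaa_{d-1},\yyy$.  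Under a fixed orientation of $C$, spherical $k$-hefty simplices with facet $\Delta$ on the two sides of $\Delta$ are in bijection with ``$k{+}1 \to k$'' and ``$k \to k{+}1$'' label transitions respectively; because $C$ is closed, these counts agree, so $\cover{k}{x,\AAA}$ does not change across $\Delta$.

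For Step~II I would show that the count depends only on $d$ and $k$ by placing isometric copies of neighborhoods of $x$ in $\AAA$ and of $x'$ in $\AAA'$ into two disjoint spherical caps of a single generic $k$-balanced configuration $\AAA''$ on $\Sspace^d$, filling the remainder with enough generic weighted points, and invoking Step~I on $\AAA''$.  For Step~III, when every weight is $0$ the $d$-plane through $d+1$ vertices intersects $\Sspace^d$ in the $(d-1)$-sphere through those vertices, and the origin-free half-space is precisely the smaller $d$-ball it bounds; the weighted and unweighted notions of $k$-hefty simplex therefore coincide, and applying Theorem~\ref{thm:covering_on_sphere} to any generic $k$-balanced zero-weight configuration yields the constant $\binom{d+k}{d}$.

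For the local covering bound I would follow the closing argument of Theorem~\ref{thm:covering_for_weighted_points} essentially verbatim. Since $\AAA \setminus \{\aaa\}$ is generic and $(k-1)$-balanced and $a$ is generic with respect to it, the global identity just proved gives $\binom{d+j}{d}$ $j$-hefty simplices of $\AAA \setminus \{\aaa\}$ through $a$ for each $j \in \{0,\ldots,k-1\}$; re-inserting $\aaa$ raises the heft of each by at most one, so in $\AAA$ at least $M_{k-1} = \sum_{j=0}^{k-1}\binom{d+j}{d}$ simplices of heft at most $k$ contain $a$ without being incident to $a$.  Summing the global identity gives $M_k$ simplices of heft at most $k$ through any generic point near $a$ in total, so at most $M_k - M_{k-1} = \binom{d+k}{d}$ of them are incident to $a$.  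I expect the main obstacle to be Step~I: one must verify that the ``origin-free'' half-space in the rotating family deforms consistently along the closed loop $C$ without the rotating $d$-plane degenerating through the origin, and that the alternation of transitions genuinely balances on $C$; the $k$-balanced hypothesis is what prevents the pathological configurations that would spoil the labelling.
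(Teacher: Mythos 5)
Your Steps I and III and the closing telescoping argument for the local bound all match the paper's proof; in fact your Step III, which reduces to the unweighted Theorem~\ref{thm:covering_on_sphere} by noting that zero weights make the weighted and unweighted notions of heft coincide, is a clean alternative to the paper's re-use of the radial construction. The genuine gap is Step~II. The ``two disjoint caps'' cut-and-paste is exactly the Euclidean argument, but in $\Rspace^d$ that argument is licensed by Lemma~\ref{lem:hefty_simplices_have_bounded_radius}: the circumscribed spheres of all $k$-hefty simplices through $x$ lie in $B(x,R)$, so $\cover{k}{x,A}$ is determined by $A \cap B(x,R)$ and is unaffected by how the configuration is completed outside that ball. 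You discard this lemma as unnecessary because finiteness already makes the count finite, but finiteness of the count is not what the lemma is used for in Step~II --- locality is. On $\Sspace^d$ there is no analogue: a $k$-hefty spherical simplex containing $x$ can have vertices spread over nearly an entire open hemisphere, and the cap of weighted points separated from the origin by its $d$-plane need not be confined to any neighborhood of $x$. Consequently, transplanting a ``neighborhood of $x$'' into a cap of $\AAA''$ and ``filling the remainder with enough generic weighted points'' does not preserve $\cover{k}{x,\cdot}$: the filler points can raise the heft of simplices you intend to keep and can create new $k$-hefty simplices through the image of $x$. This is precisely why the paper states that Step~II must be different on the sphere.

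The paper's replacement argument is tailored to the sphere: add the points of $\AAA'$ to $\AAA$ one at a time, and after adding $\aaa'$ evaluate the (by Step~I, location-independent) covering constant at the antipode $-a'$ of the new location. Any spherical simplex containing $-a'$ lies in an open hemisphere that excludes $a'$, so $\aaa'$ cannot be one of its vertices; moreover the origin-free side of its $d$-plane is a cap inside that same hemisphere, so the point $r_{a'}a' \in \Rspace^{d+1}$ is not separated from the origin by that plane and the heft of the simplex is unchanged. Hence $\cover{k}{x,\AAA} = \cover{k}{x,\AAA \cup \{\aaa'\}}$, and iterating gives $\cover{k}{x,\AAA} = \cover{k}{x,\AAA \cup \AAA'} = \cover{k}{x,\AAA'}$. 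You would need to substitute this, or some other genuinely spherical argument, for your Step~II; the rest of your proposal then goes through.
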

More precisely, if a point $x \in \Sspace^d$ does not lie on a common $(d-1)$-dimensional great-sphere with $d$ locations, then it lies inside exactly $\binom{d+k}{d}$ $k$-hefty simplices of $\AAA$.

\begin{proof}
  The proof is similar to that of Theorem~\ref{thm:covering_for_weighted_points}, so we limit ourselves to highlighting the differences.
  Step~I---which proves that $\cover{k}{x,A}$ does not depend on $x$---is essentially unchanged, except that we move along a directed great-circle instead of a directed line.
  An important technical ingredient is the generalization of Lemma~\ref{lem:orthogonal_pencil}, which we omit.

  \smallskip
  Step~II is different than in Theorem~\ref{thm:covering_for_weighted_points}, so we spell it out in full.
  Let $\AAA, \AAA' \subseteq \Sspace^d \times [0,\infty)$ be two $k$-balanced sets of weighted points, and assume that they are generic with respect to each other. 
  Letting $\aaa' \in \AAA'$, we claim that $\cover{k}{x, \AAA} = \cover{k}{x, \AAA \cup \{\aaa'\}}$ because the $k$-hefty simplices of $\AAA$ and $\AAA \cup \{\aaa'\}$ that contain $- a'$ are the same.
  Adding the remaining points of $\AAA'$ one by one to $\AAA$, we get $\cover{k}{x, \AAA} = \cover{k}{x, \AAA \cup \AAA'}$ and, by symmetry, $\cover{k}{x, \AAA'} = \cover{k}{x, \AAA' \cup \AAA}$.
  Hence, $\cover{k}{x, \AAA}$ is also independent of $\AAA$, and we write $\cover{k}{d}$.

  \smallskip
  Step~III is again similar, and we just need to adapt the radial set of Figure\ref{fig:radial} to the spherical setting by making it $k$-balanced, with a sufficiently large number of points in each of the $d+1$ directions.
  This implies $\cover{k}{d} = \binom{d+k}{d}$, as claimed.
  Finally, the proof of the inequality for local covering is as in Theorem~\ref{thm:covering_for_weighted_points}, and we omit details.
\end{proof}
Similar to the Euclidean setting, the upper bound for local covering in Theorem~\ref{thm:covering_on_sphere_weighted_case} is tight, and we refer to Figure~\ref{fig:hexmesh} and the proof of Theorem~\ref{thm:repeated_minimum_heft} in Section~\ref{sec:4.3} for further information.
We furthermore note that Step~II in the proof of Theorem~\ref{thm:covering_on_sphere_weighted_case} works verbatim in the proof of Theorem~\ref{thm:covering_on_sphere}.
However, in order to utilize the stereographic projection for the weighted points, we need to describe how the weights of the points change.

%% \newpage
%%%%%%%%%%%%%%%%%%%%%%%%%%%%%%%%%%%%%%%%%%%%%%%
%%%%%%%%%%%%%%%%%%%%%%%%%%%%%%%%%%%%%%%%%%%%%%%
\section{Applications}
\label{sec:4}
%%%%%%%%%%%%%%%%%%%%%%%%%%%%%%%%%%%%%%%%%%%%%%%
%%%%%%%%%%%%%%%%%%%%%%%%%%%%%%%%%%%%%%%%%%%%%%%

In this section, we discuss four applications of the theorems in Sections~\ref{sec:2} and \ref{sec:3}:
the connection to $k$-sets and $k$-facets in Section~\ref{sec:4.1}, counting the minima in hyperplane arrangements in Section~\ref{sec:4.2}, counting cells with minimum heft in hemisphere arrangements in Section~\ref{sec:4.3}, and the relation to the volume of hypersimplices and Eulerian numbers in Section~\ref{sec:4.4}.

%%%%%%%%%%%%%%%%%%%%%%%%%%%%%%%%%%%%%%%%%%%%%%%
\subsection{$k$-sets and $k$-facets}
\label{sec:4.1}
%%%%%%%%%%%%%%%%%%%%%%%%%%%%%%%%%%%%%%%%%%%%%%%

In this section, we briefly discuss connections between the $k$-simplices studied in the preceding two sections and the $k$-sets and $k$-facets, a classical topic in discrete geometry; see e.g.\ \cite[Chapter~11]{Mat02}.
Letting $A$ be a generic set of $n$ points in $\Rspace^d$, a \emph{$k$-set} is a subset of $k \leq n$ points, $B \subseteq A$, such that $B$ and $A \setminus B$ can be separated by a hyperplane.
Related to this is the notion of a \emph{$k$-facet}, which is a subset $D \subseteq A$ of $d$ points such that the hyperplane that passes through the points in $D$ partitions $A \setminus D$ into $k$ and $n-d-k$ points on its two sides.
It will be convenient to overload the term and to refer to $\Delta = \conv{D}$ as a $k$-facet as well.
We refer to \cite[Section~2.2]{Wag08} for a discussion of the relation between $k$-sets and $k$-facets.

\smallskip
We present alternative proofs of two well-established results, which we state in terms of $k$-facets.
Both proofs make use of the \emph{inversion} of $A$ through the unit sphere centered at a point $z \in \Rspace^d \setminus A$, which maps a point $a \in \Rspace^d \setminus \{z\}$ to $\iota_z (a) = z + (a-z)/\Edist{a}{z}^2$.
It is not difficult to see that the image of a hyperplane that avoids $z$ is a $(d-1)$-dimensional sphere that passes through $z$, and the image of the open half-space that does not contain $z$ is the open ball bounded by this sphere.
If the hyperplane passes through the points of a $k$-facet, $D \subseteq A$, and separates $z$ from the $k$-set on the other side, then the $d$-simplex spanned by $z$ and the points in $\iota_z (D)$ is a $k$-hefty simplex of $A' = \iota_z (A) \cup \{z\}$ incident to $z$.

\smallskip
Write $F_k (A)$ for the number of $k$-facets of $A$.
We first reprove the following $2$-dimensional result by Alon and Gy\H{o}ri \cite{AlGy86} for $k$ less than about a third of the number of points.
\begin{proposition}
  \label{prop:at_most_k-sets}
  Let $A$ be a generic set of $n$ points in $\Rspace^2$, and $k \leq \frac{n-3}{3}$ a non-negative integer.
  Then $\sum_{j=0}^k F_j(A) \leq (k+1) n$.
\end{proposition}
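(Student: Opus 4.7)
The plan is to invert the plane at a carefully chosen center and apply the local-covering bound from Theorem~\ref{thm:covering_for_finite_sets}. Since $d=2$ and the hypothesis $k\le\tfrac{n-3}{3}$ gives $k<\tfrac{n}{d+1}$, the Centerpoint Theorem guarantees that $\Hull{k}{A}$ has non-empty interior; pick $z\in\Rspace^2\setminus A$ in this interior, perturbing slightly so $A\cup\{z\}$ is generic, and let $A' = \iota_z(A)\cup\{z\}$ be the inverted set together with $z$.

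The first step is the identification $\sum_{j=0}^{k} F_j(A) = \#\{T : T\text{ is a }\le k\text{-hefty triangle of }A'\text{ incident to }z\}$. Given a $j$-facet $D=\{p,q\}$ with $j\le k$, the minority open half-plane bounded by the line through $D$ contains only $j\le k$ points of $A$, so by $z\in\Hull{k}{A}$ it cannot contain $z$; hence $z$ lies on the majority side. Under $\iota_z$ the line through $D$ becomes the circumscribed circle of $T_D=\{z,\iota_z(p),\iota_z(q)\}$, and the $j$ minority points map to the $j$ points of $A'$ inside that circle, so $T_D$ is a $j$-hefty triangle of $A'$ incident to $z$. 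The map $D\mapsto T_D$ is a bijection.

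Next, apply Theorem~\ref{thm:covering_for_finite_sets} to $A'$ at $z\in A'$: for each $j\le k$, at most $\binom{d+j-1}{d-1}=j+1$ $j$-hefty triangles of $A'$ incident to $z$ cover any sufficiently small neighborhood of $z$. Order the points of $A$ angularly around $z$ as $p_1,\dots,p_n$ and partition a punctured disk at $z$ into the $n$ angular sectors $s_1,\dots,s_n$ determined by the rays $\vec{zp_i}$ (inversion preserves these rays, so the angular order of $\iota_z(A)$ around $z$ is the same). Each triangle of $A'$ incident to $z$ spans a contiguous run of sectors, and the local-covering bound says each sector is spanned by at most $j+1$ $j$-hefty triangles at $z$.

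The crux---and the main obstacle---is to convert this sector-wise multiplicity bound into the count $F_j(A)\le n$. The plan is to prove that every $j$-hefty triangle at $z$ spans at least $j+1$ of the $n$ sectors: the $j$ points inside its circumscribed circle correspond, via the rays from $z$ that inversion preserves, to angular positions that, together with the two bounding rays of the wedge of $T$ at $z$, force $T$ to span $\ge j+1$ consecutive sectors. The $k$-hull condition on $z$ is essential here to prevent angular directions from being too starved and to control how the minority points distribute in the wedge. Granting this geometric claim, the double count
\[
    (j+1)\,F_j(A) \;\le\; \sum_{T\text{ is }j\text{-hefty at }z}\sigma(T) \;=\; \sum_{i=1}^n\,\#\{T\text{ is }j\text{-hefty at }z\text{ spanning }s_i\} \;\le\; n(j+1),
\]
where $\sigma(T)$ denotes the number of sectors spanned by $T$, gives $F_j(A)\le n$, and summation over $j=0,1,\dots,k$ produces the desired $\sum_{j=0}^{k}F_j(A)\le(k+1)n$.
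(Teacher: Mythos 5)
Your setup---centerpoint, inversion at $z\in\interior{\Hull{k}{A}}$, the bijection between $j$-facets and $j$-hefty triangles of $A'$ incident to $z$, and the decomposition into $n$ angular sectors---is exactly the paper's. But the step you yourself flag as the crux is a genuine gap, and in fact it cannot be repaired, for two reasons.

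First, the geometric claim is false. The circumscribed circle of a triangle $T=\conv{\{z,\iota_z(p),\iota_z(q)\}}$ passes through $z$, so its interior subtends an angle of $\pi$ at $z$; a point of $A'$ inside that circle can therefore lie at an angular position far outside the wedge of $T$ at $z$. Equivalently, back in the original picture, a point on the minority side of the line through $p,q$ need not lie in the angular wedge $\angle pzq$: take $z=0$, $p=(1,0.1)$, $q=(1,-0.1)$, and a minority point at $(1.5,100)$. Padding the configuration with points elsewhere keeps $z$ deep in the $k$-hull, yet the resulting $j$-hefty triangle spans a single sector. Second, even granting some substitute argument, your intermediate conclusion $F_j(A)\le n$ for each individual $j$ is itself a false statement: constructions of Erd\H{o}s--Lov\'asz--Simmons--Straus type give $F_j(A)=\Omega(n\log j)$, which exceeds $n$ for $j$ large but still at most $\tfrac{n-3}{3}$. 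The bound $(k+1)n$ is only true in aggregate (it is tight for convex position, where every $F_j=n$, but individual terms can overshoot while others compensate), so no proof that proceeds by bounding each $F_j$ separately by $n$ can succeed.

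The paper closes the argument differently, and you should note the two ingredients it uses in place of your claim. (1) It aggregates the local covering bound over all $j\le k$ at once: the triangles of heft at most $k$ incident to $z$ cover each sector at most $1+2+\cdots+(k+1)=\binom{k+2}{2}$ times. (2) It replaces your per-triangle lower bound on the span by a purely combinatorial upper bound on the number of triangles with a given span: a triangle incident to $z$ is determined by a pair of points of $A$, and for each $i\ge 1$ there are at most $n$ pairs at angular distance exactly $i$ around $z$, hence at most $n$ triangles spanning exactly $i$ sectors. If the total number of triangles exceeded $(k+1)n$, the sum of spans would then exceed $n\binom{k+2}{2}$, forcing some sector to be covered more than $\binom{k+2}{2}$ times---contradicting (1). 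Note that this counts all hefts $j\le k$ together and never needs to know which triangle has which heft or how the minority points distribute angularly.
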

\begin{proof}
  Recall that the $k$-hull of $A$ is the intersection of all closed half-spaces that miss at most $k$ points of $A$, denoted $\Hull{k}{A}$.
  By the Centerpoint Theorem of discrete geometry, $\Hull{k}{A}$ has a non-empty interior if $k \leq \frac{n-3}{3}$; see e.g.\ \cite[Section~4.1]{Ede87}.
  Let $z \not\in A$ be a point in the interior $\Hull{k}{A}$, and set $A' = \iota_z (A) \cup \{z\}$.
  Let $j \leq k$.
  As explained above, the inversion through the unit circle centered at $z$ maps every $j$-facet of $A$ to a $j$-hefty triangle of $A'$ incident to $z$.
  By Theorem~\ref{thm:covering_for_finite_sets}, the $j$-hefty triangles incident to $z$ cover every sufficiently small neighborhood of $z$ at most $j+1$ times, so if we consider all $j$ between $0$ and $k$, we get the intersection of these neighborhoods covered at most $1+2+\ldots+(k+1) = \binom{k+2}{2}$ times.

  \smallskip
  To continue, we draw a half-line emanating from $z$ through every point in $\iota_z (A \setminus \{z\})$, thus splitting the neighborhood of $x$ into $n$ angles.
  Every hefty triangle incident to $x$ covers a contiguous sequence of these angles, and for each $i \geq 1$, there are at most $n$ triangles that cover exactly $i$ of these angles.
  Each angle is covered some integer number of times, and we take the sum of these numbers over all angles.
  If $\sum_{j=0}^k F_j(A) > (k+1)n$, then this sum is strictly greater than $n [1+2+\ldots+(k+1)] = n \binom{k+2}{2}$, which implies that one of the angles is covered more than $\binom{k+2}{2}$ times.
  But this contradicts Theorem~\ref{thm:covering_for_finite_sets}.
\end{proof}

We continue with Lov\'{a}sz Lemma---see \cite{BFL90} but also \cite[Lemma~11.3.2]{Mat02}---which is a crucial ingredient in many arguments about $k$-sets; see e.g.\ \cite{Rub24}.
This lemma gives an asymptotic upper bound on the number of $k$-facets a line can intersect.
We give a short proof of a more general version by Welzl~\cite{Wel01}.
To state the lemma, we say a directed line, $L$, \emph{enters} a $k$-facet $\Delta$ of $A$ if $L$ intersects $\Delta$ at an interior point and moves from the side with $k$ points to the side with $n-d-k$ points as it passes through the $(d-1)$-simplex.
\begin{proposition}
  \label{prop:exact_Lovasz_lemma}
  Let $A \subseteq \Rspace^d$ be a generic finite set, $k$ a non-negative integer, and $L$ a directed line. 
  Then $L$ enters at most $\binom{d+k-1}{d-1}$ $k$-facets of $A$.
\end{proposition}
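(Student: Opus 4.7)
The plan is to mimic the inversion technique from the proof of Proposition~\ref{prop:at_most_k-sets}: translate the question about $k$-facets entered by $L$ into a question about $k$-hefty simplices incident to a single fixed point, where Theorem~\ref{thm:covering_for_finite_sets} supplies the bound. Write $n = \card{A}$.

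First, I would pick a point $z$ on $L$ that lies sufficiently far in the positive direction of $L$ --- past every $k$-facet of $A$ that $L$ meets --- and chosen generically so that $A' = \iota_z(A) \cup \{z\}$ is also generic. For any $k$-facet $\Delta = \conv{D}$ that $L$ enters, the line crosses the hyperplane through $D$ from the side containing $k$ points of $A \setminus D$ to the side containing $n-d-k$ such points; hence $z$ sits on the $(n{-}d{-}k)$-side. Under $\iota_z$, this hyperplane becomes a $(d-1)$-sphere passing through $z$ and $\iota_z(D)$, and the $k$-side (which does not contain $z$) maps to the interior of the ball bounded by this sphere. Consequently the $d$-simplex with vertex set $\{z\} \cup \iota_z(D)$ is a $k$-hefty simplex of $A'$ incident to $z$, and distinct $k$-facets entered by $L$ yield distinct such simplices. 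Note that a $k$-facet that $L$ \emph{exits} would put $z$ on the $k$-side, producing an $(n{-}d{-}k)$-hefty (rather than $k$-hefty) simplex, so these do not interfere with the count.

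Next, I would fix a point $x$ on $L$ slightly behind $z$ (toward the negative direction of $L$) and argue that $x$ lies inside every simplex produced above. The key geometric observation is that $\iota_z$ preserves rays emanating from $z$, so near $z$ the simplex $\{z\} \cup \iota_z(D)$ fills precisely the cone of directions from $z$ toward $\Delta$. Since $L$ enters $\Delta$ before reaching $z$, the reverse direction of $L$ at $z$ hits $\Delta$, placing $x$ inside this cone; choosing $x$ close enough to $z$ --- a choice that can be made uniformly since only finitely many $k$-facets are involved --- forces $x$ into each of the simplices. Applying Theorem~\ref{thm:covering_for_finite_sets} to $A'$ with $a = z$ then bounds the number of $k$-hefty simplices of $A'$ incident to $z$ that contain $x$ by $\binom{d+k-1}{d-1}$, yielding the claimed bound.

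The main subtlety is orientation bookkeeping: one must ensure that the $k$-side (not the $(n{-}d{-}k)$-side) of each entered facet is the side mapped inside the inverted sphere, which is exactly what the placement of $z$ past the facet along $L$'s direction guarantees. Beyond that, the only remaining work is verifying that $z$ may be chosen generically enough to preserve genericity of $A'$ and that $x$ may be chosen close enough to $z$ to lie simultaneously inside the finitely many relevant inverted simplices --- both straightforward finite-choice arguments.
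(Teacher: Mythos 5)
Your proposal is correct and follows essentially the same route as the paper: invert through a point $z$ far along $L$ (past the convex hull), turn each entered $k$-facet into a $k$-hefty simplex of $\iota_z(A)\cup\{z\}$ incident to $z$ that $L$ traverses just before reaching $z$, and apply the local bound of Theorem~\ref{thm:covering_for_finite_sets}. Your write-up in fact supplies details the paper leaves implicit (the orientation check that the $k$-side maps inside the sphere, and the cone argument showing a point on $L$ just behind $z$ lies in all the relevant simplices), so no gap remains.
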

\begin{proof}
  We may assume that $L$ passes through the convex hull of $A$.
  Let $z \in L$ be a point outside $\conv{A}$ that we reach after passing through the entire convex hull.
  The inversion through the unit sphere centered at $z$ maps every $k$-facet entered by $L$ to a $k$-hefty simplex incident to $z$ such that $L$ passes through this simplex before it reaches $z$.
  By Theorem~\ref{thm:covering_for_finite_sets}, there are at most $\binom{d+k-1}{d-1}$ such $k$-hefty simplices.
\end{proof}
Since a line can be directed in two different ways, the number of $k$-facets of $A$ that can be entered by a line on one or the other direction is at most $2 \binom{d+k-1}{d-1}$.

%%%%%%%%%%%%%%%%%%%%%%%%%%%%%%%%%%%%%%%%%%%%%%%
\subsection{Counting Local Minima in Levels}
\label{sec:4.2}
%%%%%%%%%%%%%%%%%%%%%%%%%%%%%%%%%%%%%%%%%%%%%%%

In this section, we give a new proof of a combinatorial result on arrangements due to Clarkson~\cite{Cla93}.
To introduce the question, we think a (non-vertical) $d$-plane in $\Rspace^{d+1}$ as the graph of an affine function, $h \colon \Rspace^d \to \Rspace$, defined by mapping $x \in \Rspace^d$ to $h(x) = \scalprod{x}{g} + c$, in which $g \in \Rspace^d \setminus \{0\}$ is the \emph{gradient} and $c \in \Rspace$ is the \emph{offset}.
We say the $d$-plane passes \emph{above}, \emph{through}, and \emph{below} a point $\xxx = (x, \alpha) \in \Rspace^d \times \Rspace$ if $\alpha - h(x)$ is negative, zero, and positive, respectively.
Consider a finite collection of such $d$-planes, and write $\Above{\xxx}$, $\Through{\xxx}$, and $\Below{\xxx}$ for the number of $d$-planes that pass above, through, and below $\xxx$.
For each integer $k$, the \emph{$k$-level} of the collection is the piecewise linear function $L_k \colon \Rspace^d \to \Rspace$ that maps $x \in \Rspace^d$ to $\xi = L_k(x)$ such that $\Above{\xxx} \leq k < \Above{\xxx} + \Through{\xxx}$ for $\xxx = (x, \xi)$.
In words, $\xi$ is the $(k+1)$-largest value assigned to $x$ by the affine functions in the collection.

\smallskip
We call a point $x \in \Rspace^d$ a \emph{local minimum} of $L_k$ if it has an open neighborhood with $L_k(x) < L_k(y)$ for every $y \neq x$ in this neighborhood.
The question answered by Clarkson~\cite{Cla93} is how many local minima there can be.
The upper bound he proves is tight and independent of the number of $d$-planes in the collection.
Incidentally, it can be used to deduce the celebrated Upper Bound Theorem for convex polytopes; see \cite{ANPS93} for another corollary of Clarkson's result and \cite{Zie95} for background on this general topic.
\begin{proposition}
  \label{prop:counting_minima_in_levels}
  Let $k$ be a non-negative integer and $L_k \colon \Rspace^d \to \Rspace$ the $k$-level of a finite collection of affine functions.
  Then $L_k$ has at most $\binom{d+k}{d}$ local minima.
\end{proposition}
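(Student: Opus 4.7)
The plan is to dualize each affine function to a weighted point in $\Rspace^d$, identify the local minima of $L_k$ with the $k$-hefty simplices of the resulting point set whose interior contains the origin, and then appeal to Theorem~\ref{thm:covering_for_weighted_points}.

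First, to each $h_i(x) = \scalprod{g_i}{x} + c_i$ in the collection I associate the weighted point $\aaa_i = (a_i, w_i)$ with location $a_i = g_i/2$ and weight $w_i = c_i + \tfrac{1}{4}\norm{g_i}^2$; a direct calculation yields $\wdist{a_i}{x} = \norm{x}^2 - h_i(x)$, so the decreasing order of the values $h_i(x)$ coincides with the increasing order of the powers $\wdist{a_i}{x}$. Let $\AAA = \{\aaa_1, \ldots, \aaa_n\}$ denote the resulting generic finite weighted point set in $\Rspace^d$.

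Next, characterize local minima. Every vertex $v = (x_0, \xi_0)$ of the arrangement is the intersection of exactly $d+1$ hyperplanes $H_{i_0}, \ldots, H_{i_d}$, whose dual weighted points share the common power $\norm{x_0}^2 - \xi_0$ at $x_0$. Let $m$ count the hyperplanes passing strictly above $v$, which equals the number of weighted points of $\AAA$ with strictly smaller power from $x_0$. A first-order expansion along $x_0 + \epsilon u$ reveals that only the $d+1$ tying values shift, each by $\epsilon\scalprod{g_{i_j}}{u}$, so that setting $r = k - m$, the $(k+1)$-st largest value at the perturbed point equals $\xi_0$ plus $\epsilon$ times the $(r+1)$-st largest of $\scalprod{g_{i_j}}{u}$. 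Strict local minimality thus requires that this $(r+1)$-st largest be positive for every direction $u \in \Sspace^{d-1}$. I would rule out $r \geq 1$ by noting that the origin-in-interior condition together with generic position forces each $g_{i_{j_0}}$ to lie outside the positive cone of the other $d$ gradients, so a standard separation argument produces a direction $u$ for which only a single $\scalprod{g_{i_j}}{u}$ is positive, contradicting the $r \geq 1$ requirement. Consequently $v$ is a local minimum iff $m = k$ and $0 \in \mathrm{int}\,\conv{\{g_{i_0}, \ldots, g_{i_d}\}} = \mathrm{int}\,\Delta$, where $\Delta = \conv{\{a_{i_0}, \ldots, a_{i_d}\}}$. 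Since $\Delta$ is $k$-hefty precisely when $m = k$, this yields a bijection between local minima of $L_k$ and $k$-hefty simplices of $\AAA$ whose interior contains the origin.

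Finally, apply the covering theorem. I would enlarge $\AAA$ to a generic thin Delone set of weighted points $\AAA'$ by inserting far-away weighted points with weights sufficiently small that none of them lies inside the orthogonal sphere of any $k$-hefty simplex of $\AAA$ containing the origin; each such simplex then remains $k$-hefty in $\AAA'$. After an infinitesimal perturbation if necessary to make the origin generic with respect to $\AAA'$, Theorem~\ref{thm:covering_for_weighted_points} gives exactly $\binom{d+k}{d}$ $k$-hefty simplices of $\AAA'$ containing the origin, so at most $\binom{d+k}{d}$ such simplices live in $\AAA$, which is the desired bound. The main obstacle is the case analysis that excludes $r \geq 1$; it rests on the separation fact that for $d+1$ generic vectors in $\Rspace^d$ whose convex hull contains the origin in its interior, any one of them can be strictly separated from the other $d$ by a hyperplane through the origin.
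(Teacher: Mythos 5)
Your proof is correct, and it takes a genuinely different---though closely parallel---route from the paper's. The paper lifts the problem one dimension: it fixes an apex $\zzz=(0,\zeta)\in\Rspace^{d+1}$ above all the $d$-planes, replaces each affine function by an unweighted point $\yyy_i\in\Rspace^{d+1}$ whose perpendicular bisector with $\zzz$ is the graph of $h_i$, shows that each local minimum yields a $k$-hefty $(d+1)$-simplex incident to $\zzz$ covering the points vertically just below $\zzz$, and then invokes the \emph{local} covering bound of Theorem~\ref{thm:covering_for_finite_sets} in $\Rspace^{d+1}$, namely $\binom{(d+1)+k-1}{(d+1)-1}=\binom{d+k}{d}$. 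You instead stay in $\Rspace^d$, dualize to weighted points via the power function (your identity $\wdist{a_i}{x}=\norm{x}^2-h_i(x)$ is the standard power-diagram correspondence), identify local minima with $k$-hefty weighted simplices whose interior contains the origin, and invoke the \emph{global} covering bound of Theorem~\ref{thm:covering_for_weighted_points} after padding $\AAA$ out to a thin Delone set. The two reductions are instances of the same lifting/power duality, but they call on different theorems from Section~\ref{sec:2}; yours pays a small price in having to extend the finite set to a thin Delone set (handled correctly: far-away points of bounded weight cannot enter the finitely many relevant orthogonal spheres), whereas the paper works directly with the finite-set statement. A genuine plus of your write-up is the explicit exclusion of the case $r=k-m\geq 1$ via the separation argument for $d+1$ gradients positively spanning $\Rspace^d$; the paper merely asserts that a local minimum forces exactly $k$ planes to pass above the vertex. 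Two points you leave implicit, as does the paper: that local minima of $L_k$ can only occur at vertices of the arrangement (your first-order analysis covers this, since $0$ cannot lie in the interior of the convex hull of at most $d$ generic gradients), and that one may assume the collection generic because the number of local minima is being maximized.
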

\begin{proof}
  We begin by fixing a point $\zzz = (0, \zeta) \in \Rspace^d \times \Rspace$ such that all $d$-planes in the given collection pass below $\zzz$.
  Furthermore, we index the $d$-planes arbitrarily and write $h_i(x) = \scalprod{x}{g_i} + c_i$
  for the $i$-th affine function.
  Since we are after the maximum number of local minima, we may assume a generic collection, which includes that at most $d+1$ affine functions agree on any one point in $\Rspace^d$.

  \smallskip
  Suppose $x \in \Rspace^d$ is a local minimum.
  Then there are $d+1$ affine functions that map $x$ to $\xi = L_k(x)$.
  Since $x$ is a local minimum, their gradients span all directions, by which we mean that their convex hull is a $d$-simplex that contains $0 \in \Rspace^d$ in its interior.
  Next we map the $i$-th affine function to the point $\yyy_i = (y_i, \eta_i)$ with $y_i = 2 \lambda_i g_i$ and $\eta_i = \zeta - 2 \lambda_i$, in which $\lambda_i = ({\zeta - c_i}) / ({\norm{g_i}^2 + 1})$.
  We note that $\lambda_i > 0$ because $\zeta > c_i$.
  Importantly, the graph of $h_i$ is the bisector of $\zzz$ and $\yyy_i$, which is not difficult to check.

  \smallskip
  Let now $x \in \Rspace^d$ be a point so that there are $d+1$ affine functions that map $x$ to $\xi = L_k(x)$, and assume for convenience that their indices are $1, 2, \ldots, d+1$.
  By construction, the $d$-sphere with center $\xxx = (x, \xi)$ that passes through $\zzz$ also passes through $\yyy_i$, for $1 \leq i \leq d+1$.
  Furthermore, a $d$-plane passes above $\xxx$ iff the corresponding point lies inside this $d$-sphere.
  By definition of $L_k$, there are between $k-d$ and $k$ such $d$-planes, but if $x$ is a local minimum, then there are exactly $k$ such $d$-planes.
  Hence, each local minimum gives rise to a $k$-hefty simplex incident to $\zzz$ in $\Rspace^{d+1}$.
  The projection of the $d$-face opposite $\zzz$ to $\Rspace^d$ is the convex hull of the points $2 \lambda_i g_i$, which are positive multiples of the $d+1$ gradients.
  It follows that $0 \in \Rspace^d$ is contained in the interior of this $d$-simplex.
  By construction, $\zzz$ lies vertically above this $d$-face, so the $(d+1)$-dimensional $k$-hefty simplex contains all points sufficiently close and vertically below $\zzz$.
  By Theorem~\ref{thm:covering_for_finite_sets}, the number of such simplices for finitely many points in $\Rspace^{d+1}$ is at most $\binom{d+k}{d}$, which implies the same upper bound for the number of local minima.
\end{proof}
Observe that a local maximum of $L_k$ is a local minimum of $L_{k-d}$, so Proposition~\ref{prop:counting_minima_in_levels} implies that $L_k$ has at most $\binom{k}{d}$ local maxima.
In particular, there are no local maxima unless $k \geq d$.

%%%%%%%%%%%%%%%%%%%%%%%%%%%%%%%%%%%%%%%%%%%%%%%
\subsection{Repeated Minimum Heft}
\label{sec:4.3}
%%%%%%%%%%%%%%%%%%%%%%%%%%%%%%%%%%%%%%%%%%%%%%%

Consider again a collection of affine functions from $\Rspace^d$ to $\Rspace$, in which the $i$-th function is defined by mapping $x$ to $h_i (x) = \scalprod{x}{g_i} + c_i$.
Its zero set, $h_i^{-1} (0)$, is a hyperplane, and we call $h_i^{-1} (-\infty, 0]$ the \emph{negative side} and $h_i^{-1} [0, \infty)$ the \emph{positive side} of the hyperplane.
The hyperplanes decompose $\Rspace^d$ into cells of dimension between $0$ and $d$.
We refer to a $d$-dimensional cell as a \emph{chamber}, and note that it partitions the hyperplanes into those that contain the chamber on their negative and positive sides, respectively.
We call number of hyperplanes of the latter kind the \emph{heft} of the chamber.

\smallskip
We study the following question: ``assuming every chamber has heft at least $k$, how many chambers can have heft exactly $k$?''
There can be at most one chamber with heft $0$, namely the common intersection of the negative sides of the hyperplanes, which is a convex polyhedron.
Beyond $k = 0$, we have an example in which there are $\binom{d+k}{d}$ chambers of minimum heft $k$, but note that this number is independent of the number of hyperplanes.
See Figure~\ref{fig:hexmesh}, which depicts the case of $\binom{2+3}{2} = 10$ chambers of minimum heft $3$ in $\Rspace^2$.
\begin{figure}[hbt]
  \centering \vspace{0.1in}
  \resizebox{!}{2.4in}{\input{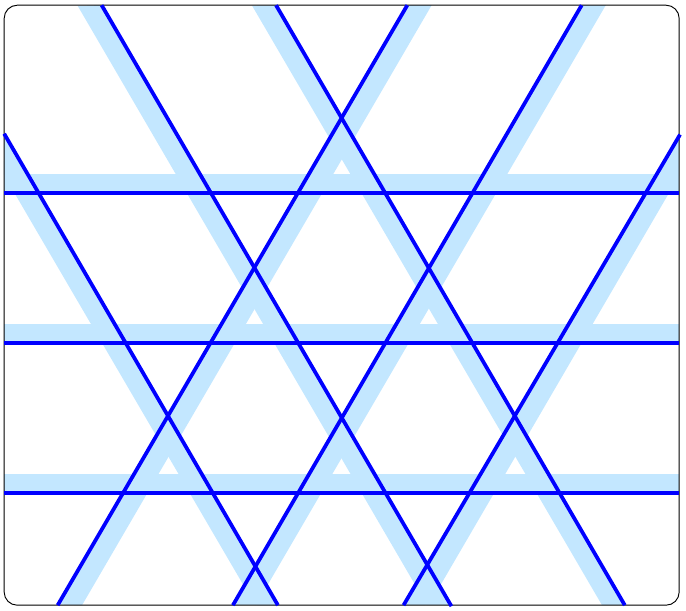_t}}
  \vspace{-0.05in}
  \caption{\footnotesize The lower bound example for $k=3$ in two dimensions.
  The \emph{shading} indicates the positive side of each line.
  The ten chambers with minimum heft $3$ are labeled as such, while all other chambers have heft strictly larger than $3$.}
  \label{fig:hexmesh}
\end{figure}
For the general case, we fix $k \geq 1$ and begin with a regular $d$-simplex, whose size is chosen so that each vertex has distance $2k-1$ from the opposite facet.
For each of the $d+1$ facets, take $k$ parallel hyperplanes as follows: the first contains the facet, and each next hyperplane is at distance $2$ from the previous one and closer to the opposite vertex, which lies on its positive side.
It is not difficult to see that the minimum heft is $k$ and the number of chambers with heft $k$ is $\binom{d+k}{d}$.
We prove that this is indeed the largest possible number of such chambers provided the affine functions are \emph{generic}, by which we mean that for each $1 \leq p \leq d$, the maximum number of hyperplanes that intersect in a common $d-p$ plane is $p$.
In particular, the maximum number of hyperplanes that intersect in a common point is $d$.
\begin{theorem}
  \label{thm:repeated_minimum_heft}
  Consider a generic collection of finitely many affine functions from $\Rspace^d$ to $\Rspace$.
  If every chamber in the decomposition of $\Rspace^d$ defined by the zero sets of these functions has heft at least $k$, then the number of chambers with heft exactly $k$ is at most $\binom{d+k}{d}$.
\end{theorem}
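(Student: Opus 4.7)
My plan is to convert the half-space arrangement into a configuration of weighted points on $\Sspace^d$ and invoke the local covering bound of Theorem~\ref{thm:covering_on_sphere_weighted_case}. First, I would embed $\Rspace^d$ as the hyperplane $\{t = 1\}$ of $\Rspace^{d+1}$ and lift each $h_i(x) = \langle x, g_i\rangle + c_i$ to the $d$-plane $\{(x,t) : \langle x, g_i\rangle + c_i t = 0\}$ through the origin; this plane meets $\Sspace^d$ in a great $(d-1)$-sphere, and $\{h_i \geq 0\}$ lifts to one of the two hemispheres. Central projection from the origin identifies each chamber of the original arrangement with a cell of the resulting hemisphere arrangement in the open upper hemisphere of $\Sspace^d$, preserving heft.

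Next I would realize each great $(d-1)$-sphere as a weighted bisector. Fix a weighted base point $\zzz = (z, w_z)$ with $z$ in the open lower hemisphere. By equation~\eqref{eqn:bisector}, for each $i$ there is a unique weighted point $\aaa_i \in \Sspace^d \times [0,\infty)$ whose bisector with $\zzz$ is the lifted great sphere of $h_i$; I normalize the sign so that $y$ lies on the lifted positive hemisphere of $h_i$ iff $\wdist{a_i}{y} < \wdist{z}{y}$. After a mild perturbation and the addition of auxiliary weighted points in the lower hemisphere to make $\AAA \cup \{\zzz\}$ generic and $k$-balanced, the heft of a chamber equals the number of $\aaa_i \in \AAA$ that are power-closer to a point of the corresponding cell than $\zzz$. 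A chamber of heft exactly $k$ is then a nonempty cell $V_B \subseteq \Sspace^d$ indexed by a unique $k$-subset $B \subseteq \AAA$, namely the power-closer set throughout $V_B$. The minimum-heft hypothesis rules out cells of heft $k-1$, so a boundary-crossing argument shows that each facet of $V_B$ lies on a bisector of $\zzz$ with an element of $\AAA \setminus B$, and every vertex of $V_B$ is a point where $\zzz$ is in power-tie with $d$ weighted points $\aaa_{j_1}, \ldots, \aaa_{j_d} \in \AAA \setminus B$.

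To each nonempty $V_B$ I associate the spherical $d$-simplex $\sigma_B$ incident to $z$ with additional vertices $a_{j_1}, \ldots, a_{j_d}$ coming from a canonically chosen vertex of $V_B$ (for instance, the vertex minimizing spherical distance to $z$, generically unique). Using the spherical analog of Lemma~\ref{lem:orthogonal_pencil} at this vertex, a short power computation shows that the weighted points in $B$ lie on the ``beyond'' side of the $d$-plane through $\zzz, \aaa_{j_1}, \ldots, \aaa_{j_d}$ in $\Rspace^{d+1}$, while all the remaining weighted points lie on the origin side; hence $\sigma_B$ has heft exactly $k$ in $\AAA \cup \{\zzz\}$. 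Since $B$ is recovered from $\sigma_B$ as the set of weighted points beyond its $d$-plane, the assignment $V_B \mapsto \sigma_B$ is injective.

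The final step is to ensure that every $\sigma_B$ contains a common conic neighborhood of $z$: with a careful choice of $z$ and of the canonical vertex, the tangent cones of the $\sigma_B$ at $z$ meet in a nonempty open cone pointing toward the minimum-heft region of $\Sspace^d$. Picking a generic $y^*$ in this common neighborhood, each chamber of minimum heft $k$ contributes a distinct $k$-hefty simplex incident to $z$ containing $y^*$, and Theorem~\ref{thm:covering_on_sphere_weighted_case} then bounds the number of such simplices, and hence of such chambers, by $\binom{d+k}{d}$. Pinning down the canonical vertex so that this common-cone property actually holds is the main obstacle; the geometry of Figure~\ref{fig:hexmesh}, where the bound is attained and the simplices $\sigma_B$ visibly align, is a guide to what the correct choice should be.
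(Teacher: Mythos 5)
Your reduction to weighted points whose bisectors with a fixed base point realize the given hyperplanes is exactly the right first move (the paper does the same, though it stays in $\Rspace^d$ with Theorem~\ref{thm:covering_for_weighted_points} rather than passing to $\Sspace^d$ and Theorem~\ref{thm:covering_on_sphere_weighted_case}), and your observation that each vertex of a minimum-heft chamber yields a $k$-hefty simplex incident to the base point is also correct. The genuine gap is the final step you flag yourself: assigning to each chamber a \emph{single} canonically chosen simplex and hoping that all of these simplices share a common point near $z$. This cannot be repaired by a cleverer choice of vertex. The tangent cone at $z$ of the simplex coming from a vertex $v$ of a chamber is spanned by the directions toward the $d$ weighted points in power-tie at $v$, and as the chambers of minimum heft are scattered all around the arrangement (as in Figure~\ref{fig:hexmesh}, where ten heft-$3$ chambers surround the central region), the cones obtained from any one vertex per chamber point in essentially arbitrary, often opposite, directions; their intersection is generically empty. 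So the injection $V_B \mapsto \sigma_B$ exists but does not land inside a single fiber of the covering, and the local covering bound does not apply to it.

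The paper's resolution is to use \emph{all} vertices of each chamber rather than one. Walking along the boundary of a bounded minimum-heft chamber winds around the base point $a_0$ exactly once, so the $k$-hefty simplices obtained from all of its vertices tile a full neighborhood of $a_0$ in one layer. This is proved by deleting the $k$ hyperplanes that have the chamber on their positive side: the chamber survives with heft $0$, the associated simplices become $0$-hefty, and they are precisely the star of $a_0$ in the weighted Delaunay mosaic of the remaining weighted points, which covers a neighborhood of the (interior) vertex $a_0$ exactly once. Each minimum-heft chamber therefore contributes one complete covering layer of simplices of heft exactly $k$ incident to $a_0$, and the local covering bound of Theorem~\ref{thm:covering_for_weighted_points} (at most $\binom{d+k}{d}$ for simplices of heft at most $k$ incident to $a_0$) caps the number of layers, hence of chambers, by $\binom{d+k}{d}$. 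Note also that this argument needs the preliminary reduction to the case where all minimum-heft chambers are bounded, which your write-up omits; an unbounded chamber does not wind around the base point.
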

\begin{proof}
  Without loss of generality, we assume that all chambers with minimum heft are bounded. 
  This can be achieved by adding new hyperplanes that carve out bounded pieces from unbounded chambers and do not intersect bounded chambers at all while preserving genericity and the heft numbers of the bounded chambers.
  Next pick weighted points $\aaa_0, \aaa_1, \ldots \in \Rspace^d \times \Rspace$ such that $\sign{\wdist{0}{x} - \wdist{i}{x}} = \sign{h_i(x)}$, for each $i \geq 1$ and every $x \in \Rspace^d$, in which we write $\wdist{j}{x}$ for the power of $x \in \Rspace^d$ from $\aaa_j$.
  To do this, we choose $\aaa_0 = (a_0, w_0)$ arbitrarily, and for each $i \geq 1$, we pick $a_i$ on the line normal to the $i$-th hyperplane that passes through $a_0$, such that $a_i - a_0$ points from the negative to the positive side, and $w_i \in \Rspace$ such that the $i$-th hyperplane is the bisector of $\aaa_0$ and $\aaa_i = (a_i, w_i)$.

  \smallskip
  Let $x$ be a point in the interior of a chamber with minimum heft, which we assume is $k$, and let $\xxx = (x, w)$ be the weighted point such that $\xxx$ and $\aaa_0$ are orthogonal; that is: $\Edist{x}{a_0}^2 = w + w_i$.
  By construction, $k$ of the $\aaa_i$ are closer than orthogonal to $\xxx$, while the others are further than orthogonal from $\xxx$.
  If we move $x$ to a vertex of this chamber, then $d$ of the latter weighted points become orthogonal to $\xxx$ while none of the others change their status.
  Hence, each vertex of the chamber defines a $k$-hefty simplex with vertex $a_0$.

  \smallskip
  When we walk along the boundary of the chamber, we also go around $a_0$ exactly once.
  In other words, these $k$-hefty simplices cover any sufficiently small neighborhood of $a_0$ exactly once.
  To see this, we remove the $k$ hyperplanes that have the chamber on their positive sides, and with them the corresponding weighted points.
  Since the heft of the chamber used to be a global minimum, none of the removed hyperplanes contains a facet of the chamber and, by assumption of genericity, neither a vertex of the chamber.
  So the chamber remains as is, only that it now has heft $0$.
  Similarly, the mentioned $k$-hefty simplices are now $0$-hefty simplices.
  In other words, they belong to the weighted Delaunay mosaic of $\aaa_0, \aaa_1, \ldots$ and, in particular, they form the star of $a_0$ in this mosaic.
  But this mosaic covers $\Rspace^d$ at most once.
  Since the chamber is bounded, $a_0$ is not a boundary vertex of this mosaic, so every sufficiently small neighborhood of $a_0$ is covered exactly once, which implies that the $k$-hefty simplices cover every such neighborhood of $a_0$ exactly once.

  \smallskip
  In conclusion: for each chamber with minimum heft $k$, we get a single layer of $k$-hefty simplices incident to $a_0$.
  By Theorem~\ref{thm:covering_for_weighted_points}, the number of layers is at most $\binom{d+k}{d}$, which implies the claimed upper bound on the number of such chambers.
\end{proof}

\begin{remark}\label{rem:repeated_minimum_heft_sphere}
  A related question considers finitely many points on the unit sphere in $\Rspace^{d+1}$ and asks how many hemispheres contain exactly $k$ of the points, assuming no hemisphere contains fewer than $k$ points; compare with the algorithmic task of finding a hemisphere that contains the minimum/maximum number of points of a given finite set in $\Sspace^d$ \cite{JP78}.
  The dual of this question is indeed similar to the one answered by Theorem~\ref{thm:repeated_minimum_heft}, except that we have hemispheres of $\Sspace^d$ instead of half-spaces in $\Rspace^d$.

  \smallskip
  The upper bound generalizes to the hemisphere setting, but only if there are sufficiently many hemispheres.
  To see this, suppose there are $n$ hemispheres in general position and $\binom{d+k}{d} + N$ chambers with minimum heft $k$.
  Note that every hemisphere has at most $\binom{d+k}{d}$ $k$-hefty chambers it does not cover.
  Indeed, we can centrally project the complement of the hemisphere to $\Rspace^d$, where Theorem~\ref{thm:repeated_minimum_heft} implies this bound.
  Hence, the hemisphere covers at least $N$ $k$-hefty chambers, but each such chamber is covered only $k$ times, which implies
  \begin{align}
    n &\leq k \left[ \binom{d+k}{d} + N \right] / \,N = k + \frac{k}{N} \binom{d+k}{d} .
  \end{align}
  For fixed $d$ and $k$, the right-hand side decreases with increasing $N$.
  Setting $N = 1$ and taking the contrapositive, this says that the upper bound stated in Theorem~\ref{thm:repeated_minimum_heft} for half-spaces in $\Rspace^d$ also applies to hemispheres in $\Sspace^d$ provided there are more than $k + k \binom{d+k}{d}$ hemispheres.

  \smallskip
  For the general case---in which the number of hemispheres is not restricted to exceed the given bound---we get $\binom{d+k}{d} + \binom{d+k-1}{d}$ as an upper bound.
  Indeed, any hemisphere in the set has at most $\binom{d+k}{k}$ $k$-hefty chambers on its negative side and, after removing the hemisphere, there are at most $\binom{d+k-1}{d}$ $(k-1)$-hefty chambers on its positive side.
  For some values of $k$ and $n$ this bound is indeed tight.
  Take for example the regular $(2k+1)$-gon on $\Sspace^1 \subseteq \Rspace^2$, which dualizes to $n = 2k+1$ half-circles with $2k+1$ $k$-hefty chambers,
  while all other chambers have heft $k+1$.
  This number agrees with the upper bound for the case $d=1$.
  
\Skip{
  \smallskip
  For small $n$ compared to $k$, there are in fact examples with more than $\binom{d+k}{d}$ $k$-hefty chambers, e.g.\ the vertices of the regular $(2k+1)$-gon on $\Sspace^1 \subseteq \Rspace^2$, which dualize to $2k+1$ half-circles with $2k+1$ $k$-hefty chambers.
  This is the minimum since all other chambers have heft $k+1$.
  Furthermore, $2k+1 > k+1$, in which the right-hand side is the upper bound if there are more than $k^2 + 2k$ half-circles.
  So we have the seemingly paradoxical situation in which the maximum number of minimum heft chambers is larger for small numbers of hemispheres than for large numbers of hemispheres.

  \AG{Nevertheless, it is possible to obtain a general upper bound for arbitrary $n$ as well. If we pick a hemisphere from our collection, its complement contains at most $\binom{d+k}{d}$ chambers of the minimum heft $k$ as we can see from central projection we mentioned before. 
  Similarly, the central projection of the hemisphere itself gives a collection of affine functions with minimum heft at least $k-1$. 
  This implies that the hemisphere contains at most $\binom{d+k-1}{d}$ chambers of heft $k$.
  Overall, there are at most $\binom{d+k}{d}+\binom{d+k-1}{d}$ chambers with minimum heft $k$. Note that this bound is sharp at least for some $d$ and $k$ as illustrated by our regular $(2k+1)$-gon example for $d=1$ above.}
}
\end{remark}

%%%%%%%%%%%%%%%%%%%%%%%%%%%%%%%%%%%%%%%%%%%%%%%
\subsection{Worpitzky's Identity for Eulerian Numbers}
\label{sec:4.4}
%%%%%%%%%%%%%%%%%%%%%%%%%%%%%%%%%%%%%%%%%%%%%%%

As the fourth and last application of our covering results, we show how the multiplicities from Theorem~\ref{thm:global_covering} are related to the volumes of hypersimplices and to Eulerian numbers.
Specifically, we show that de Laplace's relation for hypersimplices \cite{Lap86} implies Worpitzky's identity for Eulerian numbers \cite{Wor83}, and vice versa.

\smallskip
We begin by introducing the three main concepts we need in this subsection.
Letting $d$ be a positive integer, the number of \emph{descents} in a permutation $j_1, j_2, \ldots, j_d$ of $1, 2, \ldots, d$ is the number of indices $1 \leq i \leq d-1$ such that $j_i > j_{i+1}$.
For $0 \leq k \leq d-1$, the \emph{Eulerian number} for $d$ and $k$ is the number of permutations of $1,2,\ldots,d$ with exactly $k$ descents.
For example, $A(d,0) = A(d,d-1) = 1$, for every $d$, and $\sum_{k=0}^{d-1} A(d,k) = d!$.
Less obvious is Worpitzky's identity \cite{Wor83} between two polynomials from more than a century ago:
\begin{align}
  \sum\nolimits_{k=0}^{d-1} A(d,k) \binom{x+k}{d} &= x^d .
  \label{eqn:Worpitzky_identity}
\end{align}
Next, let $x_0, x_1, \ldots, x_d$ be $d+1$ affinely independent points and write $\Delta = \conv{\{x_0, x_1, \ldots, x_d\}}$ for the $d$-simplex they span.
A \emph{$p$-fold sum} is obtained by selecting $p$ of the $d+1$ points, considering them as vectors, and returning the point that corresponds to the sum of these vectors.
For each $1 \leq p \leq d$, the convex hull of the $p$-fold sums of the $d+1$ points is a $d$-dimensional convex polytope referred to as a \emph{$d$-hypersimplex of order} $p$, denoted $\Delta_d^{(p)}$.
Clearly, $\Delta_d^{(1)} = \Delta$, and more generally, $\Delta_d^{(p)}$ is a homothetic copy of the convex hull of the barycenters of all $(p-1)$-dimensional faces of $\Delta$.
Since the barycenter of a $(p-1)$-simplex is $\sfrac{1}{p}$ times the sum of its vertices, the volume of that polytope is $\sfrac{1}{p^d}$ times the volume of the homothetic hypersimplex.
Define the \emph{relative volume} of $\Delta_d^{(p)}$ as $v(d,p) = {\vol_d(\Delta^{(p)}_d)} / {\vol_d(\Delta)}$, and observe that it does not depend on the choice of $\Delta$.
Again more than a century ago, de Laplace~\cite{Lap86} proved that these relative volumes are Eulerian numbers:
\begin{align}
  v(d,k+1) &= A(d,k) ;
  \label{eqn:Laplace_relation}
\end{align}
see also the combinatorial proof of the same equation by Stanley~\cite{Sta77} and an interpretation of de Laplace's result by Foata \cite{Foa77}.
The third concept is the dual of the order-$n$ Voronoi tessellation of a finite set $A \subseteq \Rspace^d$, introduced in 1990 by Aurenhammer~\cite{Aur90}.
Referring to this dual as the \emph{order-$n$ Delaunay mosaic} of $A$, denoted $\Delaunay{n}{A}$, it is defined by its $d$-cells, each the convex hull of a collection of averages of $n$ points selected from $A$.
Specifically, for each $1 \leq p \leq d$ and every $(n-p)$-hefty simplex, take all sets of cardinality $n$ that contain all $n-p$ points inside the circumscribed sphere together with any $p$ points on the circumscribed sphere.
E.g.\ for $p=1$, we get $d+1$ averages whose convex hull is a homothetic copy of the original $d$-simplex, and for $p=2$, we get a homothetic copy of the convex hull of the midpoints of the edges of the $d$-simplex.
Collecting these polytopes, we get $\Delaunay{n}{A}$.
For $n=1$, we have $\Delaunay{1}{A} = \Delaunay{}{A}$, and more generally $\Delaunay{n}{A}$ has a $d$-cell for every $(n-p)$-hefty simplex in which $p$ varies from $1$ to $d$.
Since the vertices are averages of $n$ points, each $d$-cell in $\Delaunay{n}{A}$ has volume $\sfrac{1}{n^d}$ times the volume of the corresponding hypersimplex.
It is now easy to prove the following relation for the relative volumes of the hypersimplices.
\begin{theorem}
  \label{thm:hypersimplex_identity}
  For integers $d, n \geq 1$,
  the relative volumes of the hypersimplices satisfy
  \begin{align}
    \sum\nolimits_{p=1}^{d} v(d,p) \binom{n+d-p}{n-p} &= n^d ,
    \label{eqn:hypersimplex_identity}
  \end{align}
  in which $\binom{n+d-p}{n-p} = 0$ whenever $n-p < 0$.
\end{theorem}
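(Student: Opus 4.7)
The plan is to compute the $d$-volume of a large bounded region $R \subseteq \Rspace^d$ in two different ways, using the order-$n$ Delaunay mosaic $\Delaunay{n}{A}$ of a generic thin Delone set $A \subseteq \Rspace^d$ as a bridge between geometry and combinatorics. Recall from the preceding discussion that each $d$-cell of $\Delaunay{n}{A}$ is indexed by a pair $(\Delta, p)$ with $1 \leq p \leq d$ and $\Delta$ an $(n-p)$-hefty simplex of $A$ with vertices $v_0, v_1, \ldots, v_d$; its vertices are the $\binom{d+1}{p}$ averages $\tfrac{1}{n}(S + v_{i_1} + \cdots + v_{i_p})$, where $S$ is the sum of the $n-p$ points of $A$ inside the circumscribed sphere of $\Delta$ and $\{i_1, \ldots, i_p\}$ ranges over the size-$p$ subsets of $\{0, 1, \ldots, d\}$. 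Hence this $d$-cell is a translate of $\tfrac{1}{n}\Delta_d^{(p)}$ and has $d$-volume $\tfrac{v(d,p)}{n^d}\vol_d(\Delta)$.

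The first computation invokes the tiling property of Aurenhammer~\cite{Aur90}: the $d$-cells of $\Delaunay{n}{A}$ tile $\Rspace^d$ exactly once. By Lemma~\ref{lem:hefty_simplices_have_bounded_radius}, every hefty simplex---and therefore every corresponding mosaic cell---has diameter bounded by some $R_0 = R_0(A,n)$. Take $R$ to be a large cube and let $R^- \subseteq R$ be the cube shrunken by $R_0$ on every side; then all mosaic cells meeting $R^-$ lie inside $R$, so
\[
  \vol_d(R^-) \,\leq\, \sum_{p=1}^{d}\frac{v(d,p)}{n^d}\sum_{\Delta \subseteq R}\vol_d(\Delta) \,\leq\, \vol_d(R),
\]
where the inner sum is over $(n-p)$-hefty simplices $\Delta$ contained in $R$. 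The second computation applies Theorem~\ref{thm:global_covering}: almost every point of $R^-$ is contained in exactly $\binom{d+n-p}{d}$ distinct $(n-p)$-hefty simplices, each of which lies inside $R$ by the diameter bound, so
\[
  \binom{d+n-p}{d}\vol_d(R^-) \,\leq\, \sum_{\Delta \subseteq R}\vol_d(\Delta) \,\leq\, \binom{d+n-p}{d}\vol_d(R).
\]
Combining the two sandwiches and letting $R$ grow so that $\vol_d(R^-)/\vol_d(R) \to 1$ yields $1 = \tfrac{1}{n^d}\sum_{p=1}^{d} v(d,p)\binom{d+n-p}{d}$. Multiplying by $n^d$ and rewriting $\binom{d+n-p}{d} = \binom{n+d-p}{n-p}$ (for $n \geq p$; both sides vanish when $n < p$, which is consistent with the stated convention and with the non-existence of $(n-p)$-hefty simplices in that range) gives the identity.

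The only non-formal step is the boundary book-keeping: verifying that the discrepancy $\vol_d(R \setminus R^-)/\vol_d(R)$ can be made arbitrarily small as $R$ grows. This is immediate from the uniform diameter bound supplied by Lemma~\ref{lem:hefty_simplices_have_bounded_radius}, so everything else reduces to a direct comparison of the two expressions for $\vol_d(R)$, with the $v(d,p)$ contributed by the hypersimplex geometry and the $\binom{d+n-p}{d}$ by the covering theorem.
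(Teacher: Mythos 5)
Your proposal follows essentially the same route as the paper: double-count the volume of a large cube using the order-$n$ Delaunay mosaic, with Aurenhammer's tiling property supplying one side of the count and Theorem~\ref{thm:global_covering} supplying the multiplicity $\binom{n+d-p}{d}$ on the other, then let the cube grow.

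The one step that does not hold up as written is your appeal to Lemma~\ref{lem:hefty_simplices_have_bounded_radius} for a \emph{uniform} diameter bound $R_0 = R_0(A,n)$ valid for all hefty simplices of a thin Delone set. That lemma only provides a radius $R(x,A,k)$ depending on the point $x$; for a set that is merely coarsely dense, the empty regions can grow without bound far from the origin, so the hefty simplices need not have uniformly bounded diameter, and your ratio $\vol_d(R^-)/\vol_d(R)$ need not tend to $1$ if $R_0$ must grow with the cube. Since the identity \eqref{eqn:hypersimplex_identity} involves only the relative volumes $v(d,p)$, which are independent of the point set, the fix is simply to choose $A$ to be a genuine Delone set (uniformly discrete and relatively dense), for which circumradii of $(n-p)$-hefty simplices are bounded by a constant times $n-p$; this is exactly the choice the paper makes at the start of its proof. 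With that substitution your argument is complete and matches the paper's.
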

\begin{proof}
  Let $A$ be any Delone set---and not just a thin Delone set---in $\Rspace^d$, so that every ball whose radius exceeds some given constant contains at least one point of $A$.
  Let $R > 0$ be sufficiently large and consider all $d$-cells in $\Delaunay{n}{A}$ that are contained in $[-R,R]^d$.
  Setting $n' = \max \{ 0, n-d \}$, there are $n-n' \leq d$ different types of $d$-cells to be considered, namely homothetic copies of hypersimplices of orders $1$ to $n-n'$ defined by $(n-p)$-hefty simplices for $1 \leq p \leq n-n'$.
  The total volume of these $d$-cells is $(2R)^d + \bigOh{R^{d-1}}$, since we miss only a constant width neighborhood of each facet of the hypercube.

  \smallskip
  Consider now an $(n-p)$-hefty simplex of $A$.
  Generically, its circumscribed sphere passes through $d+1$ points and encloses $n-p$ of the points in $A$.
  By definition of Delone set, the radius of this circumscribed sphere is bounded from above by a constant times $n-p$.
  Furthermore, the $(n-p)$-hefty simplex contains every $d$-cell in $\Delaunay{n}{A}$ it may determine since the vertices of the latter are averages of the vertices of the $(n-p)$-hefty simplex.
  By Theorem~\ref{thm:global_covering}, the $(n-p)$-hefty simplices that define $d$-cells of $\Delaunay{n}{A}$ inside the hypercube therefore cover most of the hypercube exactly $\binom{n+d-p}{n-p}$ times.
  It follows that the total volume of these $p$-hefty simplices is $\binom{n+d-p}{n-p} (2R)^d + \bigOh{R^{d-1}}$.
  By definition of relative volume, the total volume of the corresponding hypersimplices thus is ${v(d,p)} \binom{n+d-p}{n-p} (2R)^d / n^d + \bigOh{R^{d-1}}$.
  Taking the sum for $1 \leq p \leq n-n'$, dividing by $(2R)^d$, and taking the limit as $R$ goes to infinity, we get the claimed relation.
\end{proof}

To see that Theorem~\ref{thm:hypersimplex_identity} together with de Laplace's relation implies Worpitzky's identity and together with Worpitzky's identity implies de Laplace's relation, we change the summation index in \eqref{eqn:hypersimplex_identity} from $p$ to $k=d-p$ and apply $\binom{n+k}{n-d+k} = \binom{n+k}{d}$ to get
\begin{align}
  \sum\nolimits_{k=0}^{d-1} v(d,d-k) \binom{n+k}{d} &= n^d .
  \label{eqn:hypersimplex_again}
\end{align}
Since this relation holds for every positive integer, $n$, it also holds if we treat $\binom{n+k}{d}$ and $n^d$ as polynomials of degree $d$ in $n$.
Substituting $A(d,k) = A(d,d-k-1)$ for $v(d,d-k)$ using de Laplace's relation \eqref{eqn:Laplace_relation}, we get Worpitzky's identity \eqref{eqn:Worpitzky_identity}.
To see the other direction, we observe that the polynomials given by the binomial coefficients are linearly independent, so there is only one way to write $n^d$ as their linear combination, and it is given by Worpitzky's identity.
Comparing \eqref{eqn:hypersimplex_again} with \eqref{eqn:Worpitzky_identity}, we get $v(d,d-k) = A(d,k) = A(d,d-k-1)$, which is \eqref{eqn:Laplace_relation}.

%% \newpage
%%%%%%%%%%%%%%%%%%%%%%%%%%%%%%%%%%%%%%%%%%%%%%%
%%%%%%%%%%%%%%%%%%%%%%%%%%%%%%%%%%%%%%%%%%%%%%%
\section{Concluding Remarks}
\label{sec:5}
%%%%%%%%%%%%%%%%%%%%%%%%%%%%%%%%%%%%%%%%%%%%%%%
%%%%%%%%%%%%%%%%%%%%%%%%%%%%%%%%%%%%%%%%%%%%%%%

Given a finite or locally finite set of points, this paper generalizes Delaunay's concept of \emph{simplices with empty circumscribed spheres} \cite{Del34} to what we call \emph{$k$-hefty simplices}, and proves that they cover space a predictable number of times.
We use this insight to give new proofs of old and new results in discrete geometry, thus providing evidence for the potential of these simplices.
Besides points and weighted points in Euclidean space, we study the spherical setting but omit others, like hyperboloic space, to which our results extend as well.
In conclusion, we highlight additional connections to topics in computational geometry and combinatorics, and formulate  questions we expect can be tackled with the methods studied in this paper.

\medskip \noindent
\textbf{Maximal Feasible Subsystems.}
Let $A \xxx \leq \bbb$ be a system of $n$ linear inequalities in $\Rspace^d$ specifying the negative halfspaces considered in Section~\ref{sec:4.3}.
When $k=0$, the system is consistent and the solution set is a convex polyhedron.
For $k>0$, the system is inconsistent and we need to remove at least $k$ inequalities to make it consistent.

\smallskip
For a given inconsistent system of inequalities, the problem of finding any minimal subset whose removal makes the system consistent is the Maximal Feasible Sybsystem ({\sc Max FS}) problem, which is known to be {\sc NP}-hard \cite{AK95}.
Theorem~\ref{thm:repeated_minimum_heft} implies that if a generic linear system can be made consistent by the removal of $k$ inequalities, but not by the removal of fewer than $k$ inequalities, then there are at most $\binom{d+k}{d}$ options to do that.

\medskip \noindent
\textbf{Chambers in Oriented Matroids.}
Every arrangement of oriented hemispheres gives rise to a realizable oriented matroid with chambers marked by sign vectors, where the $i$-th sign records whether the chamber lies inside or outside the $i$-th hemisphere; see e.g.\ \cite{RGZ17}.
Assuming each chamber of a uniform realizable oriented matroid of rank $d+1$ has at least $k$ plus signs, Remark~\ref{rem:repeated_minimum_heft_sphere} gives an upper bound on the number of chambers with exactly $k$ plus signs.
We note that this bound is different when the number of hemispheres is arbitrary and when this number is sufficiently large, in which case the bound does not depend on it.

\smallskip
We conjecture that the same bounds hold for the case of uniform non-realizable oriented matroids, or, equivalently, for the case of simple arrangements of pseudospheres in $\Sspace^d$.

\medskip \noindent
\textbf{More on $k$-facets and $k$-sets.}
As described in Section~\ref{sec:4.1}, our theorems on $k$-hefty simplices can be used to re-prove classic bounds on $k$-sets and $k$-facets.
However, the authors of this paper have yet been unable to re-prove the tight lower bound on the number of ``at most $k$-sets''.
Motivated by the connection to the rectilinear crossing number of the complete graph, $K_n$, \cite{LVWW04} proves $\sum_{j=0}^k F_j(A) \geq 3 \binom{k+2}{2}$ for every generic set $A$ of $n \geq 2k$ points in $\Rspace^2$.
For $n \geq 3k+3$, we may assume that $0 \in \Rspace^2$ is an interior point of the $k$-hull, so we can invert $A$ through the unit circle centered at $0$ and for each $j$-facet get a $j$-hefty triangle incident to $0$.
By Theorem~\ref{thm:local_covering}, the ``at most $k$-hefty'' triangles incident to $0$ cover any sufficiently small neighborhood of $0$ exactly $\binom{k+2}{2}$ times.
Since every triangle has an angle strictly less than $\pi$ at $0$, we therefore have at least $2 \binom{k+2}{2} + 1$ such triangles.
But the bound in \cite{LVWW04} says there are at least $3 \binom{k+2}{2}$ such triangles.

\smallskip
The above argument generalizes to proving that there are at least $2 \binom{d+k}{d} + 1$ ``at most $k$-hefty'' simplices incident to any one point in a generic set in $\Rspace^d$.
This lower bound appears already in \cite[Theorem~6.3]{Wag03}, along with the conjecture that it can be strengthened to at least $(d+1) \binom{d+k}{k}$ ``at most $k$-hefty'' simplices in $\Rspace^d$.

\medskip \noindent
\textbf{From $k$-hefty Simplices to Manifolds.}
A yet open question motivated by the work in this paper is the global connectivity of the $k$-hefty simplices of a given set.
For example, the $k$-hefty triangles of a finite $k$-balanced set in $\Sspace^2$ cover $\binom{k+2}{2}$ times, but this does not necessarily mean that they form that many spheres each covering $\Sspace^2$ once.

\smallskip
By duplicating vertices and edges as needed, the complex of $k$-hefty triangles and their faces can be turned into an orientable $2$-manifold.
However, this $2$-manifold is not necessarily a universal cover of $\Sspace^2$ as there may be vertex stars that cover the local neighborhood multiple times.
It would be interesting to characterize the manifolds that can arise---in $2$ but also higher dimensions---how they cover, and which local combinatorial properties determine the global connectivity of the manifold.

%% \newpage
%%%%%%%%%%%%%%%%%%%%%%%%%%%%%%%%%%%%%%%%%%%%%%%
%%%%%%%%%%%%%%%%%%%%%%%%%%%%%%%%%%%%%%%%%%%%%%%
\section*{Acknowledgments}
%\label{sec:5}
%%%%%%%%%%%%%%%%%%%%%%%%%%%%%%%%%%%%%%%%%%%%%%%
%%%%%%%%%%%%%%%%%%%%%%%%%%%%%%%%%%%%%%%%%%%%%%%

The authors would like to thank Boris Aronov and J\'anos Pach for pointing us to result of Clarkson that we reprove in Section~\ref{sec:4.2}. We also would like to thank Jes\'us De Loera for a discussion on Maximal Feasible Subsystems.

%\newpage
%%%%%%%%%%%%%%%%%%%%%%%%%%%%%%%%%%%%%%%%%%%%%%%%%%%%%%
%%%%%%%%%%%%%%%%%%%%%%%%%%%%%%%%%%%%%%%%%%%%%%%%%%%%%%

\end{document}